\providecommand{\tabularnewline}{\\}
\theoremstyle{plain}
\newtheorem{thm}{Theorem}[section]
\newtheorem*{thm*}{Theorem}
\newtheorem{lem}[thm]{Lemma}
\newtheorem{prop}[thm]{Proposition}
\newtheorem{defi}[thm]{Definition}
\theoremstyle{remark}
\newtheorem{remark}[thm]{Remark}
\newtheorem*{acknowledgement*}{Acknowledgement}
\newcommand{\Dtriangle}[3]{%
	\setlength{\unitlength}{.08cm}
	\rule[-3\unitlength]{0pt}{18\unitlength}
	\begin{picture}(18,7)(0,3)
		\put(4,4){\circle{2}}
		\put(5,4){\line(1,0){8}}
		\put(14,4){\circle{2}}
		\put(4.4472,4.8944){\line(1,2){4.1056}}
		\put(9,14){\circle{2}}
		\put(13.5528,4.8944){\line(-1,2){4.1056}}
		\put(2,3){\makebox[0pt][r]{\scriptsize #1}}
		\put(9,17){\makebox[0pt]{\scriptsize #1}}
		\put(16,3){\makebox[0pt][l]{\scriptsize #1}}
		\put(6,9){\makebox[0pt][r]{\scriptsize #2}}
		\put(12.5,9){\makebox[0pt][l]{\scriptsize #3}}
		\put(9,1){\makebox[0pt]{\scriptsize #2}}
	\end{picture}
}
\newcommand{\corr}[2]{#2}
\newcommand{\FF}{\mathbb{F}}
\newcommand{\clabu}{class of type B}
\newcommand{\bu}{of type B}
\newcommand{\toba}{\mathfrak{B}}
\newcommand{\etalchar}[1]{$^{#1}$}
\newcommand{\tr}{\operatorname{tr}}
\begin{document}

\title{On Nichols Algebras over $\mathbf{PGL}(2,q)$ and $\mathbf{PSL}(2,q)$}

\author[Freyre, Gra\~na, Vendramin]{Sebasti\'an Freyre, Mat\'{\i}as Gra\~na, Leandro Vendramin}

\address{SF, MG, LV: Departamento de Matem\'atica - FCEyN \\
Universidad de Buenos Aires \\
Pab I - Ciudad Universitaria \\
(1428) Buenos Aires - Argentina}
%\email{sfreyre@dm.uba.ar}

%\author{Mat\'{\i}as Gra\~na}
%\address{Departamento de Matem\'atica - FCEyN \\
%Universidad de Buenos Aires \\
%Pab I - Ciudad Universitaria \\
%(1428) Buenos Aires - Argentina}
%\email{matiasg@dm.uba.ar}

%\author{Leandro Vendramin}
\address{LV: Instituto de Ciencias\\
Universidad Nacional General Sarmiento\\
J. M. Gutierrez 1150 - Los Polvorines (1613) \\
Provincia de Buenos Aires - Argentina}
%\email{lvendramin@dm.uba.ar}

\address{}
\email{(sfreyre|matiasg|lvendramin)@dm.uba.ar}

\thanks{This work was partially supported by CONICET and ANPCyT (Argentina)}

\begin{abstract}
We compute necessary conditions on Yetter-Drinfeld modules over the groups
$\mathbf{PGL}(2,q)=\mathbf{PGL}(2,\FF_q)$ and
$\mathbf{PSL}(2,q)=\mathbf{PSL}(2,\FF_q)$ to generate finite dimensional
Nichols algebras.  This is a first step towards a classification of pointed
Hopf algebras with group of group-likes isomorphic to one of these groups. 

As a by-product of the techniques developed in this work, we prove that there
is no non-trivial finite-dimensional pointed Hopf algebra over the Mathieu
groups $M_{20}$ and $M_{21}=\mathbf{PSL}(3,4)$.
\end{abstract}

\maketitle
\section{Introduction}

The classification program for finite dimensional pointed Hopf algebras
comprises two different cases, according to whether the grouplikes form an
abelian or a nonabelian group. These two worlds have recently begun to approach
each other (\cite{ARXIV:0702559,ARXIV:0608701,ARXIV:0511020,ARXIV:0703498,
fantino-2007}).  Explicitly, the classification obtained by Heckenberger of
finite dimensional Nichols algebras over abelian groups
\cite{ARXIV:0509145,ARXIV:0605795,MR2207786} turned out to be a powerful tool
for the nonabelian case also.

Let us recall that, according to the lifting procedure \cite{AS}, to classify
pointed Hopf algebras with a specific group of grouplikes, the key step is to
compute the Nichols algebras generated by Yetter-Drinfeld modules over the
group.  With the abelian tools at hand, it is possible to rule out, for a given
group (or a family of groups), a large class of Yetter-Drinfeld modules which
can be shown to produce infinite dimensional Nichols algebras. Therefore, the
next step to classify pointed Hopf algebras over those groups is to study the
Nichols algebras produced by the remaining modules.

One of the key tools to produce Yetter-Drinfeld modules over groups is that of
racks and $2$-cocycles. They allow to work with the braided vector spaces
without having to resort to the groups. However, when trying to classify pointed
Hopf algebras parametrized by groups, sometimes the rack $2$-cocycles are ``too
general''. When the group is fixed, a conjugacy class can not hold any
$2$-cocycle, but only some of them. Many times, one can prove that all Nichols
algebras produced by a conjugacy class within a group are infinite-dimensional,
while one is not able to prove that, as a rack, the same thing happens with any
$2$-cocycle. Therefore, we develop the concept of \emph{\clabu}.
We use the same notation as in \cite{ARXIV:0703498}. In particular, $C_G(g)$ is 
the centralizer of $g\in G$ and $\toba(\mathcal{C},\rho)$ stands for the Nichols
algebra produced by the Yetter-Drinfeld module $M(\mathcal{C},\rho)$ (see
below). % TODO: ver que esto est\'a bien definido o citado

\begin{defi}
	Let $G$ be a group, $g\in G$ and $\mathcal{C}=\{xgx^{-1}:x\in G\}$ its
	conjugacy class. We say that $\mathcal{C}$ is a \clabu\ if for any
	representation $\rho\in\widehat{C_G(g)}$ the Nichols algebra
	$\toba(\mathcal{C},\rho)$ is infinite dimensional.
	We also say that $g$ is \bu\ if its conjugacy class is.
	We say that the group $G$ is \bu\ if all its conjugacy classes are.
\end{defi}

\begin{remark}
	By the lifting procedure, if $G$ is \bu\ then any finite dimensional pointed
	Hopf algebra with group of grouplikes (isomorphic to) $G$ is (isomorphic to)
	the group algebra of $G$.
\end{remark}

\begin{remark}\label{rem:tbym}
	It is proved in \cite{afgv} that if $g\in G$ is \bu\ and $f:G\hookrightarrow H$
	is a monomorphism of groups then $f(g)$ is \bu\ in $H$. This is the reason
	why this concept turns out to be a powerful tool.
\end{remark}

In \cite{ARXIV:0703498} Nichols algebras over $\mathbf{GL}(2,q)$ and
$\mathbf{SL}(2,q)$ were studied. In that paper it is proved that
$\mathbf{SL}(2,2^n)$ is \bu\ for $n>1$.  In this paper we deal with the groups
$\mathbf{PGL}(2,q)$ and $\mathbf{PSL}(2,q)$ for $q$ a power of an odd prime
number (recall that if $q$ is even then
$\mathbf{PSL}(2,q)=\mathbf{PGL}(2,q)=\mathbf{SL}(2,q)$).  For definitions and
elementary properties of these groups, see for example \cite{MR1369573}.

One of the main results of this work is Theorem~\ref{th:mibu}.

\begin{thm}\label{th:mibu}
	The Mathieu groups $M_{20}$ and $M_{21}=\mathbf{PSL}(3,4)$ are \bu.
\end{thm}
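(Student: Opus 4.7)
The plan is to invoke Remark~\ref{rem:tbym}, which propagates the \clabu\ property upward along subgroup inclusions: whenever $H\leq G$ is \bu, every element of $G$ that is conjugate into $H$ is itself \bu\ in $G$. Consequently, to prove that $M_{20}$ and $M_{21}$ are \bu\ it is enough to exhibit, for each non-trivial conjugacy class of the two groups, a representative lying in a subgroup already certified \bu\ by the computations carried out earlier in the paper for $\mathbf{PSL}(2,q)$ and $\mathbf{PGL}(2,q)$.

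For $M_{21}=\mathbf{PSL}(3,4)$ the natural candidates are the maximal subgroups $A_6\cong \mathbf{PSL}(2,9)$ and $\mathbf{PSL}(2,7)$. The detailed case analysis of $\mathbf{PSL}(2,q)$ developed in the body of the paper should give that both of these are \bu. It then remains to walk through the list of non-trivial conjugacy classes of $\mathbf{PSL}(3,4)$ --- whose element orders are $2,3,4,4,5,5,7,7$ --- and check that every class meets one of these two subgroups. The classes of orders $2,3,4,5$ are easily met inside $A_6$, while the two classes of order $7$ are met inside $\mathbf{PSL}(2,7)$.

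For $M_{20}\cong 2^{4}\rtimes A_{5}$ the complement $A_5\cong\mathbf{PSL}(2,5)$ is \bu\ and takes care of the classes whose representatives can be chosen inside $A_5$. The delicate cases are the classes touching the normal $2^4$: involutions coming from $2^4$ itself, and elements of order $4$ or $6$ obtained by multiplying a non-trivial vector of $2^4$ with an $A_5$-element of order $2$ or $3$. I would search for additional \bu\ subgroups of $M_{20}$ (for instance, further $\mathbf{PSL}(2,q)$-subgroups or dihedral-type subgroups already shown to be \bu) meeting these residual classes. For any class that cannot be covered in this way, the plan is to fall back on the ad-hoc machinery developed in the rest of the paper: analyse the abelian subracks of $\mathcal{C}$, apply Heckenberger's classification, and rule out the finite-dimensionality of $\toba(\mathcal{C},\rho)$ for every admissible $\rho\in\widehat{C_{M_{20}}(g)}$.

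The main obstacle will be precisely those few classes of $M_{20}$ sitting outside the complement $A_5$: because the abelian normal $2^4$ does not embed into any $\mathbf{PSL}(2,q)$-pattern in an obvious way, these classes are the ones for which the subgroup-transfer shortcut of Remark~\ref{rem:tbym} is not immediately available. Handling them is the place where the notion of \clabu\ --- as opposed to the coarser rack-level notion of not admitting a $2$-cocycle --- is essential, since one must exploit that the fixed group $M_{20}$ restricts the available characters of the centralizer and thereby rules out the residual finite-dimensional Nichols algebras.
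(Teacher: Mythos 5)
Your overall strategy --- transferring the \clabu\ property along subgroup inclusions via Remark~\ref{rem:tbym} --- is one of the two ingredients of the paper's argument, but as written the proposal has a genuine gap at exactly the classes that matter, namely those of even order.

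First, the claim that $A_6\cong\mathbf{PSL}(2,9)$ and $\mathbf{PSL}(2,7)$ are \bu\ ``by the detailed case analysis of $\mathbf{PSL}(2,q)$'' is not available: the paper does \emph{not} prove that $\mathbf{PSL}(2,q)$ is \bu. For the semisimple classes of even order it only establishes the necessary condition $\rho(g)=-1$ (Propositions~\ref{pro:psl_c6_q1} and \ref{pro:psl_c6_q3}), and these are precisely the order-$4$ classes of $\mathbf{PSL}(2,9)$ and of $\mathbf{PSL}(2,7)$. Hence your route leaves the classes 4A, 4B, 4C of $M_{21}$ uncovered. (For the odd-order and order-$7$ classes of $M_{21}$ no subgroup is needed anyway: they are real or quasireal, so Lemmas~\ref{lem:inversos} and \ref{lem:potencias} apply directly.)

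Second, for $M_{20}$ you correctly isolate the delicate classes (the involutions meeting $2^4$ and the three classes of order $4$) but then only announce that you would ``fall back on ad-hoc machinery.'' The missing concrete idea is Lemma~\ref{lem:triangulitos}: exhibit three pairwise commuting, mutually conjugate elements $g_0,g_1,g_2$ with $g_1g_2=g_0^m$ for $m$ odd and conjugators $x_1,x_2$ satisfying $x_1x_2,\,x_2x_1\in C_G(g_0)$; this produces a rank-$3$ abelian braided subspace whose triangular Dynkin diagram is excluded by Heckenberger's classification. The paper finds such configurations by a \textsf{GAP} search for each of the classes 2A, 2B, 4A, 4B, 4C of $M_{20}$, and then settles the even-order classes of $M_{21}$ by fusing them into the maximal subgroup $M_{20}=2^4{:}A_5$ --- not into $\mathbf{PSL}(2,q)$-subgroups. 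Without this step, or something equivalent, neither group is dealt with.
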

It is interesting to note that $M_{20}$ is non-simple. Also, notice that this is
the first example of a non-simple group which is \bu.  Other results of this
work are (see notations below):

\begin{thm}
	Let $G=\mathbf{PGL}(2,q)$ (see Table~\ref{tab:pgl} in \S\ref{se:pgl2q} for
	the conjugacy classes of $G$). Then, class $\mathcal{C}_2$ is \bu. Also, the
	conditions to obtain finite-dimensional Nichols algebras on the
	representations for classes $\mathcal{C}_3$, $\mathcal{C}_4$,
	$\mathcal{C}_5$ and $\mathcal{C}_6$ are given in propositions
	\ref{pro:pgl_c3}, \ref{pro:pgl_c4}, \ref{pro:pgl_c5} and \ref{pro:pgl_c6}. 
\end{thm}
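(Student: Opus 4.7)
This theorem summarises the outcome of a case-by-case study of the non-trivial conjugacy classes $\mathcal{C}_2,\ldots,\mathcal{C}_6$ of $\mathbf{PGL}(2,q)$ from Table~\ref{tab:pgl}. Its proof is an organisational one: the four statements concerning $\mathcal{C}_3,\mathcal{C}_4,\mathcal{C}_5,\mathcal{C}_6$ are exactly Propositions~\ref{pro:pgl_c3}--\ref{pro:pgl_c6}, so the only substantive piece is to establish that $\mathcal{C}_2$ is \bu. My plan is therefore to describe the common machinery first and then zoom in on $\mathcal{C}_2$.

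For every representative $g_i\in\mathcal{C}_i$ I would begin by identifying $C_G(g_i)$ explicitly from the standard description of $\mathbf{PGL}(2,q)$ and by enumerating $\widehat{C_G(g_i)}$. Given $\rho\in\widehat{C_G(g_i)}$, the Yetter-Drinfeld module $M(\mathcal{C}_i,\rho)$ is then analysed via the technique of \cite{ARXIV:0703498}: one locates an abelian subrack $\{h_1,\ldots,h_k\}\subseteq\mathcal{C}_i$, computes the diagonal $k\times k$ braiding that $\rho$ induces on the subspace spanned by the corresponding vectors, and invokes Heckenberger's classification~\cite{MR2207786} of finite-dimensional Nichols algebras of diagonal type. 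The scalars $q_{jj}$ and the products $q_{jk}q_{kj}$ must then appear on Heckenberger's list; any $\rho$ failing this test is eliminated. The characters that pass are precisely the ones recorded in the four propositions for $\mathcal{C}_3$--$\mathcal{C}_6$.

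For the claim about $\mathcal{C}_2$ the goal is stronger: no $\rho$ whatsoever may survive. The strategy is to exhibit several commuting conjugate pairs inside $\mathcal{C}_2$ whose combined Heckenberger constraints become incompatible for every $\rho\in\widehat{C_G(g_2)}$. Remark~\ref{rem:tbym} is a convenient shortcut: if one can embed in $\mathcal{C}_2$ an element already known to be \bu\ in a subgroup (for instance inside $\mathbf{SL}(2,q)$ via the results of \cite{ARXIV:0703498}, or in a small subgroup of $\mathbf{PGL}(2,q)$), the claim is immediate. Otherwise one argues directly: since the centralizer of an element in $\mathcal{C}_2$ is comparatively large, the class contains enough commuting pairs to pin down the braiding matrix on several two-dimensional subracks simultaneously, and the resulting diagonal braidings are forced to lie outside Heckenberger's list regardless of the choice of $\rho$.

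The main obstacle, as already observed in \cite{ARXIV:0703498} for the $\mathbf{GL}/\mathbf{SL}$ situation, lies in the two semisimple classes (split and non-split) whose centralizer is a cyclic torus of order $q-1$ or $q+1$: every irreducible representation is one-dimensional and there are many of them, while the commuting pairs one can build inside such a class are constrained by the Weyl-group action on the torus. This rigidity prevents a uniform exclusion of all $\rho$ and instead forces one to describe the surviving characters explicitly, which is exactly the content of Propositions~\ref{pro:pgl_c5} and~\ref{pro:pgl_c6}; by contrast the unipotent and remaining involution-type classes have enough internal flexibility to make Propositions~\ref{pro:pgl_c3} and~\ref{pro:pgl_c4} more straightforward. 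Once the five individual class analyses are in place, the theorem follows by simple concatenation.
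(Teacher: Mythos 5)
There is a genuine gap: the one substantive claim of the theorem --- that $\mathcal{C}_2$ is \bu\ --- is never actually proved in your proposal. You offer two strategies (embed an element already known to be \bu\ via Remark~\ref{rem:tbym}, or ``pin down the braiding matrix on several two-dimensional subracks'' until Heckenberger's list is violated), but you execute neither, and you never identify the two elementary facts that make the claim a one-liner: (a) $\mathcal{C}_2$ is a \emph{real} class in $\mathbf{PGL}(2,q)$, since conjugating $c_2=\left(\begin{smallmatrix}1&1\\&1\end{smallmatrix}\right)$ by $\left(\begin{smallmatrix}-1&\\&1\end{smallmatrix}\right)$ gives $c_2^{-1}$; and (b) $c_2$ has \emph{odd} order $p$, so the scalar $\rho(c_2)$ is a $p$-th root of unity. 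Lemma~\ref{lem:inversos} then forces $\rho(c_2)=-1$, which is impossible, and the class is \bu\ for every $\rho$. (The paper gets the same conclusion by pushing the unipotent classes of $\mathbf{PSL}(2,q)$, handled in Propositions~\ref{pro:psl_c12345_q1} and~\ref{pro:psl_c12356_q3}, through the inclusion $\mathbf{PSL}(2,q)\hookrightarrow\mathbf{PGL}(2,q)$ via Remark~\ref{rem:tbym}.) Your ``direct'' alternative is not a proof as stated: it is not clear which commuting conjugate unipotents you would take, what diagonal braiding they produce, or why it fails Heckenberger's list for \emph{every} character of the centralizer; without the odd-order observation there is no visible contradiction. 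Note also that your suggested shortcut via $\mathbf{SL}(2,q)$ is misdirected, since $\mathbf{SL}(2,q)$ is not a subgroup of $\mathbf{PGL}(2,q)$; the correct subgroup is $\mathbf{PSL}(2,q)$.

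A secondary mismatch: the restrictions in Propositions~\ref{pro:pgl_c3}--\ref{pro:pgl_c6} are not obtained by the abelian-subrack/Heckenberger analysis you describe. For $\mathcal{C}_3$ and $\mathcal{C}_5$ one only checks that the class is real and applies Lemma~\ref{lem:inversos} (forcing even order and $\rho(g)=-1$); for $\mathcal{C}_4$ and $\mathcal{C}_6$ the representative is an involution, so $\rho(g)=-1$ is automatic and one merely reads off which characters of the \emph{dihedral} centralizers $\mathbb{D}_{q\mp1}$ take the value $-1$ at the relevant central element. Your description inverts the structure of the centralizers (the split and non-split semisimple classes $\mathcal{C}_3$, $\mathcal{C}_5$ have cyclic centralizers of order $q-1$ and $q+1$, while $\mathcal{C}_4$ and $\mathcal{C}_6$ have dihedral ones), and no appeal to Heckenberger's classification is needed anywhere in this section beyond what is already packaged inside Lemmas~\ref{lem:inversos} and~\ref{lem:triangulitos}.
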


\begin{thm}
	Let $G=\mathbf{PSL}(2,q)$ (see Tables~\ref{tab:psl_q1} and \ref{tab:psl_q3}
	in \S\ref{se:psl2q}). Then, classes $\mathcal{C}_i$ are \bu\ for $i<6$.
	Also, the conditions to obtain finite-dimensional Nichols algebras on the
	representation for class $\mathcal{C}_6$ are given in
	Proposition~\ref{pro:psl_c6_q1} if $q\equiv1\pmod4$, or in
	Proposition~\ref{pro:psl_c6_q3} if $q\equiv3\pmod4$.
\end{thm}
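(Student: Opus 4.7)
The plan is to treat each conjugacy class $\mathcal{C}_i$ separately, exploiting the inclusion $\mathbf{PSL}(2,q)\hookrightarrow\mathbf{PGL}(2,q)$ together with Remark~\ref{rem:tbym}: any element of PSL whose image lies in a PGL-class already proven to be of type B (notably $\mathcal{C}_2$ of PGL) is immediately done. So my first pass is to match each PSL-class with a PGL-class. Those that land in a type-B PGL-class are settled at once; the remaining work concerns classes that split when passing from PGL to PSL, plus those whose PGL-counterpart only received conditional statements.

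For the remaining classes with $i<6$, I would compute the centralizer $C_G(g)$ inside PSL (which may be strictly smaller than the PGL-centralizer), enumerate its irreducible representations $\rho\in\widehat{C_G(g)}$, and for each $\rho$ analyze $M(\mathcal{C}_i,\rho)$. When $C_G(g)$ is abelian, as happens for the unipotent and split/non-split semisimple classes in $\mathbf{PSL}(2,q)$, the braiding on $M(\mathcal{C}_i,\rho)$ is diagonal on a suitable basis of eigenvectors; I would then run through Heckenberger's classification of finite-dimensional Nichols algebras of diagonal type to rule out finite dimensionality for every character. When $C_G(g)$ is non-abelian, the strategy is to locate an abelian subrack inside $\mathcal{C}_i$ and a corresponding sub-Yetter-Drinfeld module of diagonal type, then invoke Heckenberger's classification on this reduction, passing the type-B conclusion back up via Remark~\ref{rem:tbym}.

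The class $\mathcal{C}_6$ is the one whose non-split-torus centralizer actually produces finite-dimensional Nichols algebras for certain characters, which is why only conditions (not type B) are claimed. Here the centralizer is cyclic, of order essentially $(q+1)/2$, so every $\rho$ is a character $\chi$; the diagonal braiding on $M(\mathcal{C}_6,\chi)$ has entries that are explicit roots of unity depending on $\chi$. The plan is to sweep down Heckenberger's list of Dynkin diagrams of finite type and retain exactly those $\chi$ whose associated braiding appears on the list. The result is packaged as Propositions~\ref{pro:psl_c6_q1} and~\ref{pro:psl_c6_q3}; the split by $q\bmod 4$ arises because the $2$-part of $(q+1)/2$, and hence the set of roots of unity available to $\chi$, depends on that congruence.

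The main obstacle I expect is twofold bookkeeping. First, the splitting behavior of PGL-classes into PSL-classes depends on $q\bmod 4$ and changes both the centralizer and its character group, forcing a case-by-case verification. Second, for $\mathcal{C}_6$, exhaustively matching every character of the non-split torus against Heckenberger's list without missing boundary cases is delicate. By contrast, the type-B arguments for $i<6$ become essentially mechanical once the right abelian subrack is spotted; spotting it, particularly inside the unipotent classes where one must work within an elementary abelian subgroup of a Sylow $p$-subgroup, is the small creative step.
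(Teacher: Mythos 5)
Your central mechanism runs Remark~\ref{rem:tbym} in the wrong direction. That remark transfers the property of being \bu\ \emph{up} a monomorphism $f:G\hookrightarrow H$, from the subgroup to the overgroup: if $g$ is \bu\ in $G$ then $f(g)$ is \bu\ in $H$. Your first pass tries to deduce that a class of $\mathbf{PSL}(2,q)$ is \bu\ from the fact that the containing class of $\mathbf{PGL}(2,q)$ is \bu, i.e.\ you are pulling the property \emph{down} to the subgroup, which the remark does not permit: the centralizer in $\mathbf{PGL}(2,q)$ is larger, so ruling out all of its irreducible representations says nothing a priori about the representations of the smaller centralizer in $\mathbf{PSL}(2,q)$. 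Worse, the paper's $\mathbf{PGL}(2,q)$ results are themselves obtained by pushing the $\mathbf{PSL}(2,q)$ results up the inclusion, so your scheme would also be circular. The actual argument is much lighter: the unipotent classes $\mathcal{C}_2$, $\mathcal{C}_3$ and the semisimple class of odd order (which is $\mathcal{C}_4$ or $\mathcal{C}_6$ according to $q\bmod 4$) are real with representatives of odd order, so Lemma~\ref{lem:inversos} forces $\rho(g)=-1$, which is impossible for $g$ of odd order; and the same lemma applied to the remaining torus class yields exactly the ``even order and $\rho(g)=-1$'' conditions of Propositions~\ref{pro:psl_c6_q1} and~\ref{pro:psl_c6_q3} --- no sweep through Heckenberger's list is performed or needed there, so your description of those propositions also overstates their content.

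The second genuine omission is the class of involutions ($\mathcal{C}_5$). There $\rho(g)=-1$ is perfectly consistent, so Lemma~\ref{lem:inversos} gives nothing, and your generic ``find an abelian subrack and check Heckenberger'' plan never identifies which subrack to take. The paper uses the specific configuration of Lemma~\ref{lem:triangulitos}: $\mathbb{A}_4$ embeds in $\mathbf{PSL}(2,q)$, all involutions of $\mathbf{PSL}(2,q)$ lie in $\mathcal{C}_5$, and the three commuting conjugate involutions of $\mathbb{A}_4$ produce a rank-$3$ braided subspace of diagonal type whose triangle Dynkin diagram is never of finite type; Remark~\ref{rem:tbym}, now used in the correct direction (from $\mathbb{A}_4$ up to $\mathbf{PSL}(2,q)$), then finishes the argument. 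Without this, or some equivalent explicit subrack, your treatment of the $i=5$ case is incomplete.
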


\section{Preliminaries}

\subsection{Notations}
As said, we use the same notation as in \cite{ARXIV:0703498}.  The number $p$
will be an odd prime number and $q$ a power of $p$, $E=\mathbb{F}_{q^{2}}$ will
be the quadratic extension of $\mathbb{F}_{q}$, $\overline{x}=x^q$ will be the
Galois conjugate of $x\in E$, $k$ will be an algebraically closed field of
characteristic zero, and we will write $\mathcal{R}_{n}$ for the set of
primitive $n$-th roots of $1$ in $k$. If $q\equiv1\pmod4$ we write $\pm i$ for
the square roots of $-1$ in $\mathbb{F}_q$.
Following the notation in \cite{jeffreyadams}, we fix an element
$\Delta\in\FF_q\setminus\FF_q^2$ and an element $\delta\in E$ which is a square
root of $\Delta$. We consider elements $z\in E$ being written as $z=x+\delta y$,
with $x,y\in\FF_q$.

If $G$ is a finite group, $\mathcal{C}$ is a conjugacy class of $g\in G$, and
$\rho$ is an irreducible representation of the centralizer $C_{G}(g)$, we write
$\mathfrak{B}(\mathcal{C},\rho)$ (or $\mathfrak{B}(\mathcal{C})$ if no confusion
can arise) for the Nichols algebra generated by the Yetter-Drinfeld module
$V(g,\rho)$ (see for example \cite{MR1714540}). 

\subsection{Representations of the Dihedral group}\label{sub:repdihe}
We present the Dihedral group $\mathbb{D}_n$ of order $2n$ by generators $r$ and
$s$ and relations 
$$r^{n}=s^{2}=1,\quad srs=r^{-1}.$$
When $n$ is even, the irreducible representations are given in Table
\ref{tab:diedral} (see \cite[Section 5.3]{MR0450380}).  We will not need the
Dihedral groups $\mathbb{D}_n$ with $n$ odd.

\begin{table}
\caption{Character table of $\mathbb{D}_n$ ($n$ even)}
\label{tab:diedral}
\begin{tabular}{|c|c|c|c|}
%\caption[Longtable]{Character table of $\mathbb{D}_n$ ($n$ even)}\\% & % & % & %\tabularnewline
\hline
						   & Degree & $r^{k}$                  & $sr^{k}$\tabularnewline \hline
$\chi_{1}$                 & $1$    & $1$                      & $1$\tabularnewline \hline
$\chi_{2}$                 & $1$    & $1$                      & $-1$\tabularnewline \hline
$\chi_{3}$                 & $1$    & $(-1)^{k}$               & $(-1)^{k}$\tabularnewline \hline
$\chi_{4}$                 & $1$    & $(-1)^{k}$               & $(-1)^{k+1}$\tabularnewline \hline
$\mu_{h}$ ($0<h<\frac n2$) & $2$    & $2\cos\frac{2\pi hk}{n}$ & $0$\tabularnewline \hline
\end{tabular}
\end{table}

\subsection{Main tools}
Note that, since $\rho$ is irreducible and $g\in Z(C_{G}(g))$ (the center of
$C_G(g)$), then $\rho(g)$ is a scalar (by Schur lemma).  The following lemmas
are contained in \cite{ARXIV:0702559, ARXIV:0608701, ARXIV:0511020,
ARXIV:0703498, ARXIV:0605795}.

\begin{lem}
\label{lem:inversos}Assume that $\dim\mathfrak{B}(\mathcal{C},\rho)<\infty.$
If $g^{-1}\in\mathcal{C}$ then $\rho(g)=-1$.
\end{lem}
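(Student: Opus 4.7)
The plan is to use the fact that Nichols algebras of braided subspaces embed into the ambient Nichols algebra, together with Heckenberger's classification of rank-$2$ diagonal-type Nichols algebras \cite{ARXIV:0605795}. As noted in the text, Schur's lemma gives $\rho(g)=\lambda\,\mathrm{id}$ for some $\lambda\in k^\times$, and the goal is to show $\lambda=-1$.

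Since $g^{-1}\in\mathcal{C}$, I would pick $h\in G$ with $hgh^{-1}=g^{-1}$, a nonzero $v$ in the $g$-isotypic component of $V(g,\rho)$, and set $w=h\cdot v$, which lies in the $g^{-1}$-isotypic component. This second component is acted on by $C_G(g^{-1})=C_G(g)$ via the twisted representation $\rho_h(x)=\rho(h^{-1}xh)$; in particular $g$ acts on $w$ by $\rho(h^{-1}gh)=\rho(g^{-1})=\lambda^{-1}$ and $g^{-1}$ acts by $\lambda$. Using the braiding formula $c(x\otimes y)=(t\cdot y)\otimes x$ for $x$ of degree $t$, a short calculation gives
\[
c(v\otimes v)=\lambda\,v\otimes v,\quad c(w\otimes w)=\lambda\,w\otimes w,\quad c(v\otimes w)=\lambda^{-1}\,w\otimes v,\quad c(w\otimes v)=\lambda^{-1}\,v\otimes w,
\]
so $W=\operatorname{span}(v,w)$ is a braided subspace of diagonal type.

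If $g^2=1$ then $\lambda^2=1$, and $\lambda=1$ is impossible because $v$ alone would generate the polynomial subalgebra $k[v]\subset\toba(\mathcal{C},\rho)$; hence $\lambda=-1$. If $g^2\neq 1$, the vectors $v$ and $w$ lie in distinct weight spaces and are linearly independent, so $W$ is genuinely two-dimensional, with equal vertex labels $\lambda$ and edge label $\tilde{q}_{12}=\lambda^{-2}$. For $\lambda$ of order strictly greater than $2$, the resulting Cartan integers are $a_{12}=a_{21}=2$, giving the affine Cartan matrix of type $A_1^{(1)}$, which is absent from Heckenberger's list of finite-dimensional Nichols algebras of diagonal type \cite{ARXIV:0605795}; hence $\toba(W)\hookrightarrow\toba(\mathcal{C},\rho)$ would be infinite-dimensional, a contradiction. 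The main subtlety is pinning down the action of $g$ on the $g^{-1}$-isotypic component as $\lambda^{-1}$ (not $\lambda$), which forces $\tilde{q}_{12}=\lambda^{-2}\neq 1$ whenever $\lambda\neq\pm 1$; once this is done, the rank-$2$ classification does the rest.
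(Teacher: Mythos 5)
Your argument is correct and is essentially the proof the paper delegates to the reference \cite{ARXIV:0511020}: one realizes the $g$- and $g^{-1}$-homogeneous components as a rank-two diagonal braided subspace with matrix $\left(\begin{smallmatrix}\lambda&\lambda^{-1}\\\lambda^{-1}&\lambda\end{smallmatrix}\right)$, rules out $\lambda=1$ by the polynomial subalgebra, and rules out $\lambda\neq\pm1$ because the diagram $\lambda\,\overset{\lambda^{-2}}{\text{---}}\,\lambda$ is of affine Cartan type $A_1^{(1)}$ and hence absent from Heckenberger's rank-two list. Only cosmetic quibbles: the off-diagonal Cartan integers should be $a_{12}=a_{21}=-2$ (not $2$), and for special roots of unity such as $\lambda\in\mathcal{R}_3$ it is worth noting explicitly that the diagram still fails to appear in the list (e.g.\ reflecting at a vertex produces the forbidden label $1$).
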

\begin{proof}
See \cite{ARXIV:0511020}.
\end{proof}
As in the cited papers, a conjugacy class is called \emph{real} if it contains
the inverses of its elements.

\begin{lem}
\label{lem:potencias}Assume that $\dim\mathfrak{B}(\mathcal{C},\rho)<\infty.$
If there exist $n>1$ such that $g^{n}\in\mathcal{C}$ then $\rho(g)=-1$
or $\rho(g)\in\mathcal{R}_{3}$. Moreover, if $g^{n^{2}}\ne g$ then
$\rho(g)=-1$.
\end{lem}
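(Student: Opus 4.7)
The plan is to produce a two-dimensional diagonal braided subspace inside $V(g,\rho) \otimes V(g,\rho)$ and feed it into Heckenberger's classification of finite-dimensional Nichols algebras of diagonal type.

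First I will pick $h \in G$ with $h g h^{-1} = g^n$, and set $g' := h^{-1} g h$. The relation $h g h^{-1} = g^n$ gives $g = (g')^n$, so $g$ is a power of $g'$ and therefore $g' \in C_G(g)$. Since $g$ lies in the centre of $C_G(g)$ and $\rho$ is irreducible, Schur's lemma yields $\rho(g) = \lambda\, \mathrm{Id}$ for a scalar $\lambda$. Next I will choose a nonzero $w$ in the representation space of $\rho$ which is an eigenvector of $\rho(g')$ with eigenvalue $\mu$; this forces $\mu^n = \lambda$. Setting $v := 1 \otimes w$ and $v' := h \otimes w$ (which are linearly independent provided $g \neq g^n$, otherwise the hypothesis is vacuous), a short computation using $hg = g^n h$ shows that $\mathrm{span}(v,v')$ is a braided subspace of $V(g,\rho)$, and the induced braiding is diagonal with matrix
$$\begin{pmatrix} \lambda & \mu \\ \lambda^n & \lambda \end{pmatrix}.$$

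Because the Nichols algebra of a sub-braided vector space embeds in the ambient one, this rank-two diagonal braiding must have finite-dimensional Nichols algebra. I would then invoke Heckenberger's classification of rank-two arithmetic root systems: among the configurations with equal diagonal entries $\lambda$, intersecting the allowed parameters with the Diophantine relation $\mu^n = \lambda$ should force $\lambda \in \{-1\} \cup \mathcal{R}_3$. For the moreover statement, when $g^{n^2} \neq g$ I will enlarge the construction using $v'' := h^2 \otimes w$, whose degree is $g^{n^2}$. Since all three of $g, g^n, g^{n^2}$ commute, one obtains a three-dimensional abelian-subrack-induced diagonal braided subspace (after refining the choice of $w$ to a common eigenspace, or alternatively by applying the rank-two analysis again with $n$ replaced by $n^2$ and combining constraints), and the additional Cartan-type conditions eliminate the cube-root case and pin down $\rho(g) = -1$.

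The main obstacle is precisely the Heckenberger step: one has to go through the short list of rank-two (and, for the moreover, rank-three) diagonal braidings of finite type with all diagonal entries equal to $\lambda$ and verify, case by case, that the constraint $\mu^n = \lambda$ only admits $\lambda$ of order $2$ or $3$ in the first part, and forces $\lambda = -1$ once a third commuting conjugate is adjoined.
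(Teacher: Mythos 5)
The paper does not actually prove this lemma --- the ``proof'' is the citation ``see \cite{ARXIV:0703498}'' --- so the comparison has to be with the standard argument that reference contains, which does begin exactly as you do: realize the abelian subrack $\{g,g^n\}$ as $\mathrm{span}(1\otimes w,\,h\otimes w)$ and read off the diagonal braiding $\left(\begin{smallmatrix}\lambda&\mu\\ \lambda^{n}&\lambda\end{smallmatrix}\right)$. That part of your proposal is correct. The problem is the step you defer: ``intersecting the allowed rank-two parameters with $\mu^{n}=\lambda$ should force $\lambda\in\{-1\}\cup\mathcal{R}_3$'' is not a routine verification you have postponed --- it is false, so the plan as written does not establish even the first assertion. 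Two concrete issues. First, $\mu$ is not a free $n$-th root of $\lambda$: since $g$ and $g^{n}$ are conjugate they have the same order $M$, so $\gcd(n,M)=1$; taking $m$ with $nm\equiv1\pmod M$ and raising $hgh^{-1}=g^{n}$ to the $m$-th power gives $g'=h^{-1}gh=g^{m}$. Hence $\mu=\lambda^{m}$ exactly, the braiding is of Cartan type with both vertices $\lambda$ and edge $\lambda^{n+m}$, and \emph{this} determinacy is what makes the Heckenberger case analysis close up (e.g.\ when $g^{n^{2}}=g$ one has $m\equiv n$, the edge is $\lambda^{2n}$, and the arithmetic facts $\gcd(2n,n^{2}-1)\le 2$ and $\gcd(2n+1,n^{2}-1)\mid 3$, combined with $\lambda^{n^{2}-1}=1$, yield precisely $\lambda=-1$ or $\lambda\in\mathcal{R}_3$).

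Second, and more seriously, the rank-two subspace cannot carry the first assertion when $g^{n^{2}}\neq g$. Take $n=2$ and $\lambda$ a primitive $5$th root of unity (so $5\mid M$ and $g^{4}\neq g$). Then $m=3$, the edge label is $\lambda^{n+m}=\lambda^{5}=1$, the two vertices decouple, and $\mathfrak{B}(W)\cong\mathfrak{B}(k_\lambda)\otimes\mathfrak{B}(k_\lambda)$ has dimension $25<\infty$: no analysis of this rank-two braiding, with or without the constraint $\mu^{n}=\lambda$, can exclude $\lambda\in\mathcal{R}_5$. One is forced to bring in $g^{n^{2}}$ already for the first assertion; e.g.\ the pair $1\otimes w$, $h^{2}\otimes w$ has edge $\lambda^{n^{2}+m^{2}}=\lambda^{13}=\lambda^{3}=\lambda^{-2}$, which is of affine Cartan type and hence of infinite type. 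So the correct architecture is: if $g^{n^{2}}=g$, the rank-two Cartan computation above gives $\{-1\}\cup\mathcal{R}_3$; if $g^{n^{2}}\neq g$, one uses the three commuting conjugates $g,g^{n},g^{n^{2}}$ (all off-diagonal entries being explicit powers of $\lambda$, namely $\lambda^{n+m}$, $\lambda^{n+m}$, $\lambda^{n^{2}+m^{2}}$) to force $\lambda=-1$. Your sketch assigns the rank-two step a job it cannot do and leaves the entire load-bearing case analysis --- which moreover requires knowing $\mu=\lambda^{m}$ and $\nu=\lambda^{m^{2}}$ rather than just $\mu^{n}=\lambda$, $\nu^{n}=\mu$ --- unexamined.
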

\begin{proof}
For the proof see for example \cite{ARXIV:0703498}. 
\end{proof}
As in the cited papers, a conjugacy class is called \emph{quasireal} if it contains
proper powers of its elements.

The next lemma is useful to treat, for instance, some conjugacy classes of
involutions. A particular case of this Lemma appears also in
\cite{fantino-2007}. 

\begin{lem}\label{lem:triangulitos}
	Let $G$ be a group.
	\begin{enumerate}
		\item\label{lem:trip1}
			Assume that $g_0,g_1,g_2\in G$ are conjugate and commute to each
			other, that $x_{1}x_{2}$ and $x_{2}x_{1}$ belong to $C_{G}(g_{0})$,
			(where $x_i$ are such that $g_{i}=x_{i}g_{0}x_{i}^{-1}$ for
			$i=1,2$), and that $g_1g_2=g_0^m$ for an odd integer $m$.
			Then $g_0$ is \bu.
		\item\label{lem:trip2}
			The conjugacy class of involutions in the alternating group
			$\mathbb{A}_4$ is \bu.
		\item\label{lem:trip3}
			If $g_0,x\in G$ are such that $g_0$ has order $2$ and both $x$ and
			$g_0x$ have order $3$, then $g_0$ is \bu.
	\end{enumerate}
\end{lem}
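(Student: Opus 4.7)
The plan is to treat parts (2) and (3) as consequences of part (1), which carries all the technical weight. For part (3), I would observe that the relations $g_0^2 = x^3 = (g_0 x)^3 = 1$ are precisely the standard presentation of the alternating group $\mathbb{A}_4$. Thus $\langle g_0,x\rangle$ is a quotient of $\mathbb{A}_4$, and since the only nontrivial proper quotient of $\mathbb{A}_4$ is $\mathbb{Z}/3$, which contains no involution, we must have $\langle g_0,x\rangle\cong\mathbb{A}_4$. Then $g_0$ is the image of an involution under this embedding, so part (2) combined with Remark~\ref{rem:tbym} forces $g_0$ to be \bu\ in $G$.

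For part (2), I would apply part (1) to the three commuting involutions $g_0=(12)(34)$, $g_1=(13)(24)$, $g_2=(14)(23)$ of $\mathbb{A}_4$. A direct check gives $g_1g_2 = g_0$, so $m=1$ satisfies the odd-integer hypothesis. Taking $x_1=(234)$ and $x_2=(243)=x_1^{-1}$, one verifies $x_ig_0x_i^{-1}=g_i$ and $x_1x_2 = x_2x_1 = 1 \in C_G(g_0)$, so every hypothesis of part (1) holds.

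The heart of the argument is part (1). Here I would restrict attention to the sub-rack $\{g_0,g_1,g_2\}\subset\mathcal{C}$: since its elements commute, the rack is abelian, and after selecting a common eigenvector of the commuting operators $\rho(g_1),\rho(g_2)$ inside $V_\rho$ (the element $\rho(g_0)$ being already a scalar by Schur), the associated sub-Yetter--Drinfeld module becomes a rank-$3$ braided vector space of diagonal type. Writing $a=\rho(g_0)$, $b=\rho(g_1)$, $c=\rho(g_2)$, a direct computation using the transporters $x_i$ and the hypotheses $x_1x_2, x_2x_1\in C_G(g_0)$ yields $q_{ii}=a$, $q_{01}=c$, $q_{10}=b$, $q_{02}=b$, $q_{20}=c$; and the relation $g_1g_2=g_0^m$ (combined with $x_j^{-1}g_0x_j=g_{3-j}$) forces $q_{12}=b^ma^{-1}$, $q_{21}=c^ma^{-1}$. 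Consequently the edge-products $q_{ij}q_{ji}$ are $a^m$, $a^m$, and $a^{m^2-2}$, depending only on $a$ and $m$.

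It then remains to verify that this rank-$3$ diagonal braiding never appears in Heckenberger's classification of finite-dimensional Nichols algebras of diagonal type. The case $a=1$ yields a polynomial subalgebra and is immediate. For $a\ne 1$, I would enumerate the connected rank-$3$ generalized Dynkin diagrams with common vertex label $a$ and edge-product labels drawn from $\{a^m,a^{m^2-2}\}$, ruling each out using that $m$ is odd, together with Lemmas~\ref{lem:inversos} and~\ref{lem:potencias} to further restrict $a$ (e.g.\ to $-1$ or to $\mathcal{R}_3$). This final case-by-case elimination is the main obstacle: the oddness of $m$ is crucial both for ruling out accidental collapses such as $a^{m^2-2}=1$ and for producing parity contradictions in the Cartan-type conditions $a^m = a^{-a_{ij}}$.
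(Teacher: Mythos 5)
Your reduction is exactly the paper's: parts (\ref{lem:trip2}) and (\ref{lem:trip3}) are handled the same way (the triple of commuting involutions in $\mathbb{A}_4$ with $x_2=x_1^{-1}$, and the presentation $g_0^2=x^3=(g_0x)^3=1$ of $\mathbb{A}_4$ together with Remark~\ref{rem:tbym}), and for part (\ref{lem:trip1}) your computation of the conjugators $\gamma_{ij}=x_j^{-1}g_ix_j$ and of the resulting diagonal braiding agrees with the paper's matrix $\gamma$; in particular the edge products $a^m,\,a^m,\,a^{m^2-2}$ on a triangle with all vertices labelled $a=\rho(g_0)$ are correct.

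The problem is that you stop exactly where the proof has to be finished. You write that it ``remains to verify'' that this rank-$3$ diagonal braiding is never of finite type and that you ``would enumerate'' the possible diagrams, calling this elimination ``the main obstacle'' --- but you never carry it out, so the contradiction with the oddness of $m$ is never actually produced. This is the one step that makes the lemma true, and it cannot be left as a plan. The paper closes it by appealing to Heckenberger's rank-$3$ classification \cite[Table 3]{ARXIV:0509145}: for a finite-dimensional Nichols algebra whose generalized Dynkin diagram has all three vertices labelled by the same scalar $\lambda_0$ with edge products as above, the table forces $\lambda_0=-1$; the edge labels are then $(-1)^m$, $(-1)^m$, $(-1)^{m^2-2}$, and finiteness requires these to make the diagram one appearing in the table, which happens only when $m$ is even --- contradicting the hypothesis that $m$ is odd. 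If you want to run your own enumeration instead of citing the table, you must in particular dispose of the degenerate possibilities you yourself raise (e.g.\ $a^m=1$ with $a\neq 1$, which disconnects the triangle), and that requires an actual argument, not a declaration of intent. As written, the proposal reproduces the paper's setup faithfully but omits its conclusion.
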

\begin{proof}
	Part \ref{lem:trip2} is a consequence of Part \ref{lem:trip1} by taking
	$g_0=(1\,2)(3\,4)$, $g_1=(1\,3)(2\,4)$, $g_2=(1\,4)(2\,3)$,
	$x_1=(2\,4\,3)$ and $x_2=x_1^{-1}$ (we have $g_1g_2=g_0$ in this way).
	Also, Part \ref{lem:trip3} is a consequence of Part \ref{lem:trip2} and
	Remark \ref{rem:tbym} since, as proved in \cite[Theorem 265]{MR0104735}
	%\cite{MR1511128} 
	(or use \textsf{GAP} to check it), $\mathbb{A}_4$ can be presented by generators
	$g_0$ and $x$ and relations
	\[
		g_0^2=x^3=(g_0x)^3=1.
	\]
	Therefore, we need to prove Part \ref{lem:trip1}.
	First notice that, since $g_{i}g_{j}=g_{j}g_{i}$, there exists $w\in V\setminus\{0\}$
	and $\lambda_i\in\mathbb{C}$ such that $\rho(g_{i})(w)=\lambda_{i}w$ for
	$i=0,1,2$. For any $0\leq i,j\leq2$, let
	$\gamma_{ij}=x_{j}^{-1}g_{i}x_{j}\in C_{G}(g_{0})$.  It is easy to see that
	\[
	\gamma=(\gamma_{ij})=\left(\begin{array}{ccc}
		g_{0} & g_{2} & g_{1}\\
		g_{1} & g_{0} & g_{1}^{m}g_{0}^{-1}\\
		g_{2} & g_{2}^{m}g_{1}^{-1} & g_{0}\end{array}\right).
	\]
	Then, $W=\text{span}\{x_{1}\otimes w,x_{2}\otimes w,x_{3}\otimes w\}$
	is a braided vector subspace of $M(\mathcal{C},\rho)$ of abelian type
	with Dynkin diagram given by
	\[
		\Dtriangle{$\lambda_0$}{$\lambda_0^m$}{$\lambda_0^{m^2-2}$}
	\]
	For $\mathfrak{B}(\mathcal{C},\rho)$ to be finite dimensional, we should have
	$\lambda_{0}=-1$ (see \cite[Table 3]{ARXIV:0509145}) and $m$ should be an even
	number, which contradicts our assumption.
\end{proof}

\section{Two examples: $M_{20}$ and $M_{21}$} In \cite{fantino-2007} the five
simple Mathieu groups are studied.  In this section we will study finite
dimensional Nichols algebras over the Mathieu groups $M_{20}$ and $M_{21}$
($=\mathbf{PSL}(3,4)$).  We begin with the non-simple Mathieu group $M_{20}$.
For a definition and elementary properties of this group, see \cite[Chapter
XII]{MR0224703}.  Since we do computations with $\textsf{GAP}$, we use the
product in the symmetric groups as they do: the product of two permutations
$\sigma$ and $\tau$ means the composition of the permutation $\sigma$ followed
by $\tau$. 

We know (see for example \cite{WebATLAS}) that
$M_{20}=\langle\alpha,\beta\rangle$ as a subgroup of $\mathbb{S}_{20}$, where
\begin{gather*}
\alpha=(1,2,4,3)(5,11,7,12)(6,13)(8,14)(9,15,10,16)(1,19,20,18),\\
\beta=(2,5,6)(3,7,8)(4,9,10)(11,17,12)(13,16,18)(14,15,19).
\end{gather*}

This is a group of order $960$. The conjugacy classes are 1A, 2A, 2B, 3A, 4A,
4B, 4C, 5A, 5B (we are using the \textsf{ATLAS} notation for conjugacy classes,
the name of a class begins with the order of its elements).

\begin{prop}\label{pro:m20}
	The Mathieu group $M_{20}$ is \bu.
\end{prop}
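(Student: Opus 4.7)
The plan is to verify that each conjugacy class of $M_{20}$ is \bu, handling them one by one with the aid of \textsf{GAP} computations. The trivial class $1A$ is automatically \bu, since $\rho(e)=1$ makes the braiding on $V(e,\rho)$ the ordinary flip and $\toba(\mathcal{C},\rho)$ is then a symmetric algebra.

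For the three odd-order classes $3A$, $5A$, $5B$, I will first check in \textsf{GAP} that each is real. Then, for $g\in\mathcal{C}$ of odd order $n$, Lemma~\ref{lem:inversos} forces $\rho(g)=-1$ on any hypothetical finite-dimensional $\toba(\mathcal{C},\rho)$; but Schur's lemma combined with $g^n=1$ forces $\rho(g)^n=1$, which is incompatible with $\rho(g)=-1$ when $n$ is odd. Hence each of these three classes is \bu.

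For the involution classes $2A$ and $2B$, I will invoke Lemma~\ref{lem:triangulitos}(\ref{lem:trip3}): a short \textsf{GAP} search exhibits, for each class, an element $x$ of order $3$ such that $g_0x$ also has order $3$; equivalently, an embedding $\mathbb{A}_4\hookrightarrow M_{20}$ carrying the non-trivial involution of $\mathbb{A}_4$ onto $g_0$. The hypotheses of Lemma~\ref{lem:triangulitos}(\ref{lem:trip3}) then apply and $g_0$ is \bu.

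The three order-$4$ classes $4A$, $4B$, $4C$ will be the most delicate. For $g$ of order $4$ the only proper power of $g$ with order $4$ is $g^{-1}$, so any quasireal order-$4$ class is automatically real; once \textsf{GAP} confirms this, Lemma~\ref{lem:inversos} forces $\rho(g)=-1$. I will then apply Lemma~\ref{lem:triangulitos}(\ref{lem:trip1}), exhibiting via \textsf{GAP} three pairwise-commuting conjugates $g_0,g_1,g_2\in\mathcal{C}$ together with conjugators $x_1,x_2$ such that $x_1x_2,x_2x_1\in C_G(g_0)$ and $g_1g_2=g_0^m$ for an odd integer $m$. With $\lambda_0=-1$ and $m$ odd, the Dynkin triangle produced by the lemma carries $-1$ at every vertex and along every edge, and this diagram is absent from Heckenberger's list of finite-type rank-$3$ connected diagrams.

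The main obstacle I anticipate is exactly the order-$4$ case: finding a commuting triangle of conjugates whose product is an odd power of $g_0$ depends on the fine $2$-local structure of the relevant centralizer, and is best carried out by a direct computational search inside each $C_G(g_0)$. Should an $\mathbb{A}_4$-witness fail to appear immediately for one of the involution classes, the same Lemma~\ref{lem:triangulitos}(\ref{lem:trip1}) strategy provides a fallback.
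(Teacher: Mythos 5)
Your proposal is correct and follows essentially the same strategy as the paper: the trivial class is discarded, the odd-order classes are handled by realness and Lemma~\ref{lem:inversos}, and the even-order classes are handled by Lemma~\ref{lem:triangulitos} with witnesses found by a \textsf{GAP} search (the paper in fact applies part~\ref{lem:trip1} uniformly to 2A, 2B, 4A, 4B, 4C with explicit commuting triangles satisfying $g_1g_2=g_0$, whereas you invoke the $\mathbb{A}_4$-criterion of part~\ref{lem:trip3} for the involutions, but that is just a corollary of part~\ref{lem:trip1}). Your preliminary reduction to $\rho(g)=-1$ for the order-$4$ classes is harmless but unnecessary, since Lemma~\ref{lem:triangulitos}(\ref{lem:trip1}) already forces $\lambda_0=-1$ from the Dynkin diagram and needs no realness hypothesis.
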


\begin{proof}
	By \cite[Remark 1.1]{ARXIV:0511020} we know that the class 1A gives 
	infinite dimensional Nichols algebras for every representation. 
	Using \textsf{GAP} it is easy to check that all conjugacy classes of $M_{20}$ 
	are real. Therefore, 
	by Lemma \ref{lem:inversos}, the conjugacy classes with representatives 
	of odd order (i.e. 3A, 5A, 5B) 
	will also give infinite dimensional Nichols algebras.

	All the remaining classes are dealt with Lemma~\ref{lem:triangulitos}. We
	just list the elements $g_1$, $g_2$ and $x_1$ for each of them. In all the
	cases, we put $g_0=g_1g_2$ and $x_2=x_1^{-1}$.
	We have used \textsf{GAP} to find those elements, with the help of some
	scripts which may be downloaded from the third author's web page:
	\verb|http://mate.dm.uba.ar/~lvendram/|.

	For class 2A, we take
	\begin{gather*}
		g_1=(2,3)(5,19)(6,13)(7,18)(8,14)(11,17)(12,20)(15,16), \\
		g_2=(2,15)(3,16)(5,18)(6,8)(7,19)(11,12)(13,14)(17,20),\\
		x_1=(1,4,10)(3,16,15)(5,11,14)(6,18,17)(7,12,8)(13,19,20).
	\end{gather*}
	For class 2B,
	\begin{gather*}
		g_1=(1,15)(2,10)(3,4)(5,13)(6,7)(8,18)(9,16)(14,19), \\
		g_2=(1,13)(2,7)(3,19)(4,14)(5,15)(6,10)(8,9)(16,18), \\
		x_1=(2,19,8)(3,18,6)(4,9,10)(5,13,15)(7,14,16)(12,17,20).
	\end{gather*}
	For class 4A,
	\begin{gather*}
		g_1=(1,2,10,15)(3,4,16,9)(5,14,7,8)(6,19,13,18)(11,12)(17,20), \\
		g_2=(1,5,4,19)(2,14,16,13)(3,6,15,8)(7,9,18,10)(11,17)(12,20), \\
		x_1=(2,6,5)(3,8,7)(4,10,9)(11,12,17)(13,18,16)(14,19,15).
	\end{gather*}
	For class 4B,
	\begin{gather*}
		g_1=(1,3,4,15)(2,10,16,9)(5,8,19,6)(7,14,18,13)(11,20)(12,17), \\
		g_2=(1,7,9,19)(2,6,3,14)(4,18,10,5)(8,15,13,16)(11,12)(17,20), \\
		x_1=(2,6,5)(3,8,7)(4,10,9)(11,12,17)(13,18,16)(14,19,15).
	\end{gather*}
	For class 4C,
	\begin{gather*}
		g_1=(1,16,9,15)(2,10,3,4)(5,6,18,14)(7,13,19,8)(11,17)(12,20), \\
		g_2=(1,18,10,19)(2,13,15,6)(3,8,16,14)(4,7,9,5)(11,20)(12,17), \\
		x_1=(2,6,5)(3,8,7)(4,10,9)(11,12,17)(13,18,16)(14,19,15).
	\end{gather*}
\end{proof}

We do the same now with $M_{21}$:
\begin{prop}
	The Mathieu group $M_{21}=\mathbf{PSL}(3,4)$ is \bu.
\end{prop}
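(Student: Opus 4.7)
The plan is to follow the blueprint of Proposition~\ref{pro:m20}. According to the \textsf{ATLAS}, the group $M_{21}=\mathbf{PSL}(3,4)$ has eight conjugacy classes: 1A, 2A, 3A, 4A, 5A, 5B, 7A, 7B. The trivial class 1A is ruled out by \cite[Remark 1.1]{ARXIV:0511020}, regardless of the representation, so only seven classes remain to analyze.

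For the five odd-order classes 3A, 5A, 5B, 7A, 7B, a short \textsf{GAP} check settles which are real and which are only quasireal. For every real class, Lemma~\ref{lem:inversos} forces $\rho(g)=-1$, which cannot hold when $\rho(g)$ is a root of unity of odd multiplicative order. For a quasireal but non-real class, Lemma~\ref{lem:potencias} gives the same conclusion: since the orders involved lie in $\{3,5,7\}$ and any witnessing power $n$ is at most $4$, the condition $g^{n^{2}}\ne g$ is automatic and the strong conclusion $\rho(g)=-1$ applies once more.

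For the two remaining classes 2A and 4A we invoke Lemma~\ref{lem:triangulitos}, exactly in the style of Proposition~\ref{pro:m20}. For the involution class 2A the easiest route is part~(3) of that lemma: exhibit $g_0\in{}$2A and $x\in M_{21}$ with $|x|=|g_0 x|=3$, which is a two-element check immediately verifiable in \textsf{GAP} because $\mathbf{PSL}(3,4)$ admits many $(2,3,3)$-generating pairs. For class 4A we apply part~(1), exhibiting three pairwise commuting conjugate order-$4$ elements $g_0,g_1,g_2$ with $g_1 g_2 = g_0^{m}$ for an odd $m$, together with conjugators $x_1,x_2=x_1^{-1}$ whose products $x_1 x_2$ and $x_2 x_1$ lie in $C_{M_{21}}(g_0)$. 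In both cases the explicit witnesses are located using the \textsf{GAP} scripts referenced in the proof of Proposition~\ref{pro:m20}, and the proof ends with a table listing them.

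The main obstacle is class 4A, because order-$4$ elements of $\mathbf{PSL}(3,4)$ are regular unipotent and hence have rather small centralizers: one computes $|C_{M_{21}}(g_0)|=16$ and in fact $C_{M_{21}}(g_0)\cong C_4\times C_4$, with all twelve of its order-$4$ elements lying in class 4A. Abstract commuting triples with $g_1g_2=g_0$ therefore exist inside the centralizer, but the additional requirement on the conjugators $x_1,x_2$ is subtle and does not admit a conceptual shortcut; the feasibility of producing appropriate $x_1,x_2$ is the technical heart of the proof and must be verified computationally, exactly as in the $M_{20}$ case.
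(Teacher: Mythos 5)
Your class list for $M_{21}=\mathbf{PSL}(3,4)$ is wrong, and this creates a real hole. The group has ten conjugacy classes, not eight: there are \emph{three} classes of elements of order $4$ (4A, 4B, 4C in \textsf{ATLAS} notation --- the regular unipotent $\mathbf{GL}(3,4)$-class splits into three classes in the simple group, fused only by outer automorphisms). Your proposal never mentions 4B and 4C, so even granting everything else, two classes are left untreated. This is repairable (either handle each of 4A, 4B, 4C separately, or note that an outer automorphism of $\mathbf{PSL}(3,4)$ permutes them and invoke Remark~\ref{rem:tbym}), but as written the enumeration on which the whole case analysis rests is false. The treatment of the odd-order classes is fine and matches the paper's (real classes via Lemma~\ref{lem:inversos}, the quasireal classes 7A, 7B via Lemma~\ref{lem:potencias}), as is the 2A argument via Lemma~\ref{lem:triangulitos}, part (3), since $\mathbf{PSL}(3,4)$ visibly contains $\mathbb{A}_4$ and has a single class of involutions.

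For the order-$4$ classes your route also diverges from the paper's in a way that leaves the proof unfinished. The paper does \emph{not} hunt for witnesses to Lemma~\ref{lem:triangulitos} inside $M_{21}$ itself: it uses that $M_{20}=2^4{:}\mathbb{A}_5$ is a maximal subgroup of $\mathbf{PSL}(3,4)$, that every even-order class of $M_{20}$ is already known to be \clabu\ (Proposition~\ref{pro:m20}), and that the property transfers along the inclusion by Remark~\ref{rem:tbym}; the only computation needed is the class fusion, which shows that 2A, 4A, 4B and 4C all meet $M_{20}$ in even-order classes. Your alternative --- applying Lemma~\ref{lem:triangulitos}, part (1), directly inside the centralizer $C_4\times C_4$ of a 4A-element --- is not carried out: you yourself flag the existence of suitable conjugators $x_1,x_2$ as ``the technical heart'' to be ``verified computationally,'' and you additionally assert without justification that all twelve order-$4$ elements of that centralizer lie in the class of $g_0$ (a claim that is exactly the kind of fusion statement that can fail when a class splits three ways, and on which your choice of $g_1,g_2$ with $g_1g_2=g_0$ depends). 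So for 4A (and the missing 4B, 4C) what you have is a plausible plan, not a proof; the paper's subgroup-plus-fusion argument closes precisely this step and is the part of the argument you would need to either reproduce or replace with completed computations.
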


\begin{proof}
	As in all groups, the trivial class 1A is \bu. 
	Classes 3A, 3B, 5A and 5B are real, as can be checked with \textsf{GAP}
	using the function \texttt{RealClasses}. Then these classes are \bu\ by
	Lemma~\ref{lem:inversos}. Classes 7A and 7B are quasireal, as can be checked
	with \textsf{GAP} with the function \texttt{PowerMaps}. Then these classes
	are \bu\ by Lemma~\ref{lem:potencias}.

	We are left with classes 2A, 4A, 4B, 4C. By \textsf{ATLAS}, we know that the
	semidirect product $2^4:\mathbb{A}_5=M_{20}$ is a maximal subgroup of
	$\mathbf{PSL}(3,4)$ (see \cite[pp. 23]{MR827219}). Now the result follows
	from \ref{pro:m20}, by looking at how classes of even order in $M_{20}$ sit
	into $M_{21}$, by using the function
	\texttt{PossibleClassFusions}\corr{}{:
	since $M_{20}$ is a subgroup of $M_{21}$ there exists a monomorphism
	$f:M_{20}\hookrightarrow M_{21}$ and therefore there exist a fusion of conjugacy
	classes. Moreover, with \texttt{GAP} we calculate all possible fusion
	of conjugacy classes and by inspection of all of these fusions it
	is easy to arrive to the desired result (see the file \texttt{fgv.log} 
	for the results).}
	
	%	\marginpar{Podemos dar un ejemplo?}
\end{proof}

\section{Nichols algebras over $\mathbf{PSL}(2,q)$} \label{se:psl2q}
In this section we deal with the groups
$\mathbf{PSL}(2,q)=\mathbf{SL}(2,\FF_q)/\{\pm I\}$, which have order
$\frac{(q-1)q(q+1)}{2}$.  Recall that if $q\ne2,3$ then $\mathbf{PSL}(2,q)$ is
simple (see eg. \cite[Theorem 2.6.8]{MR1369573}).
\corr{Since we know that
$\mathbf{PSL}(2,3)=\mathbb{A}_{4}$ and $\mathbf{PSL}(2,5)=\mathbb{A}_{5}$, we
may suppose that $q>5$.}{}

\begin{table}[ht]
\caption{Conjugacy classes of $\mathbf{\mathbf{PSL}}(2,\mathbb{F}_q)$ for $q\equiv1(4)$}
\label{tab:psl_q1}
\begin{tabular}{|c|c|c|c|}
\hline 
& Representative & Number & Size\tabularnewline \hline 
$\mathcal{C}_{1}$ & $c_{1}=\left(\begin{array}{cc} 1\\ & 1\end{array}\right)$
	& $1$ & $1$\tabularnewline \hline 
$\mathcal{C}_{2}$ & $c_{2}=\left(\begin{array}{cc} 1 & 1\\ & 1\end{array}\right)$
	& $1$ & $\frac{q^{2}-1}{2}$\tabularnewline \hline 
$\mathcal{C}_{3}$ & $c_{3}=\left(\begin{array}{cc} 1 & \Delta\\ & 1\end{array}\right)$
	& $1$ & $\frac{q^{2}-1}{2}$\tabularnewline \hline 
%$\mathcal{C}_{4}$ & $c_{4}(x)=\left(\begin{array}{cc} & -1\\ 1 & x+\overline{x}\end{array}\right)$ $(x\in E\setminus\mathbb{F}_{q})$
%$\mathcal{C}_{4}$ & $c_{4}(x)=\left(\begin{array}{rr} & -1\\ 1 & x\end{array}\right)$ $(x\in \mathbb{F}_{q}\setminus\{0\})$
%	& $\frac{q-1}{4}$ & $q(q-1)$\tabularnewline \hline 
$\mathcal{C}_{4}$ & $c_{4}(z)=\left(\begin{array}{rr} x & \Delta y \\ y & x
\end{array}\right)$ ($N(z)=x^2-\Delta y^2=1$, $y\neq 0$)
	& $\frac{q-1}{4}$ & $q(q-1)$\tabularnewline \hline 
$\mathcal{C}_{5}$ & $c_{5}=\left(\begin{array}{cc} i\\ & -i\end{array}\right)$
	& $1$ & $\frac{q(q+1)}{2}$\tabularnewline \hline 
%$\mathcal{C}_{6}$ & $c_{6}(x)=\left(\begin{array}{cc} x\\ & x^{-1}\end{array}\right)$ ($x\in\mathbb{F}_{q}\setminus\FF_q^2$)
%	& $\frac{q-5}{4}$ & $q(q+1)$\tabularnewline \hline
$\mathcal{C}_{6}$ & $c_{6}(x)=\left(\begin{array}{cc} x\\ & x^{-1}\end{array}\right)$ ($x\notin\{\pm 1,\pm i\}$)
	& $\frac{q-5}{4}$ & $q(q+1)$\tabularnewline \hline
\end{tabular}
\end{table}

\begin{table}
\caption{Conjugacy classes of $\mathbf{PSL}(2,q)$ for $q\equiv3(4)$}
\label{tab:psl_q3}
\begin{tabular}{|c|c|c|c|}
\hline 
& Representative & Number & Size\tabularnewline \hline 
$\mathcal{C}_{1}$ & $c_{1}=\left(\begin{array}{cc} 1\\ & 1\end{array}\right)$ & $1$ & $1$\tabularnewline \hline 
$\mathcal{C}_{2}$ & $c_{2}=\left(\begin{array}{cc} 1 & 1\\ & 1\end{array}\right)$ & $1$ & $\frac{q^{2}-1}{2}$\tabularnewline \hline 
$\mathcal{C}_{3}$ & $c_{3}=\left(\begin{array}{cc} 1 & \Delta \\ & 1\end{array}\right)$ & $1$ & $\frac{q^{2}-1}{2}$\tabularnewline \hline 
$\mathcal{C}_{4}$ & $c_{4}(z)=\left(\begin{array}{rr} x & \Delta y \\ y & x \end{array}\right)$ ($N(z)=x^2-\Delta y^2=1$, $xy\neq 0$) & $\frac{q-3}{4}$ & $q(q-1)$\tabularnewline \hline 
$\mathcal{C}_{5}$ & $c_{5}=\left(\begin{array}{cc} & -1\\ 1\end{array}\right)$ & $1$ & $\frac{q(q-1)}{2}$\tabularnewline \hline 
$\mathcal{C}_{6}$ & $c_{6}(x)=\left(\begin{array}{cc} x\\ & x^{-1}\end{array}\right)$ ($x\notin\{\pm 1\}$) & $\frac{q-3}{4}$ & $q(q+1)$\tabularnewline \hline
\end{tabular}
\end{table}

\subsection{The case $q\equiv1(4)$}
There are $\frac{q+5}{2}$ conjugacy classes, which are listed in Table~\ref{tab:psl_q1} (see \cite{jeffreyadams}).

\begin{prop}\label{pro:psl_c12345_q1}
	Classes $\mathcal{C}_i$ are \bu\ for $i=1,2,3,4,5$.
\end{prop}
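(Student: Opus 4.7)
The plan is to treat each class separately using the tools of Section~2. The trivial class $\mathcal{C}_1$ is handled by the standard observation (Remark~1.1 of \cite{ARXIV:0511020}) that Yetter--Drinfeld modules supported at the identity always produce infinite-dimensional Nichols algebras. For the remaining four classes I intend to combine reality of the class with an odd-order obstruction (for $\mathcal{C}_2,\mathcal{C}_3,\mathcal{C}_4$), or a triangle-type argument (for $\mathcal{C}_5$).

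For $\mathcal{C}_2$ and $\mathcal{C}_3$, the representative $c_i$ is unipotent of order $p$, and conjugation by $\mathrm{diag}(a,a^{-1})$ scales its upper-right entry by $a^2$; thus in $\mathbf{SL}(2,q)$ the class of $c_i$ consists of the upper-triangular unipotent matrices whose upper-right entry lies in a fixed square class of $\FF_q^\times$. Because $-1$ is a square when $q\equiv 1\pmod 4$, both $\mathcal{C}_2$ and $\mathcal{C}_3$ are real, and reality descends to $\mathbf{PSL}(2,q)$. Lemma~\ref{lem:inversos} then forces $\rho(c_i)=-1$, contradicting $\rho(c_i)^p=\rho(1)=1$ since $p$ is odd.

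For $\mathcal{C}_4$, the norm-one condition $N(z)=1$ gives $\bar z=z^{-1}$, so $c_4(z)$ and $c_4(z)^{-1}=c_4(\bar z)$ share the same characteristic polynomial; since non-split semisimple classes in $\mathbf{SL}(2,q)$ are distinguished by this invariant, $\mathcal{C}_4$ is real. Moreover, the order of $[c_4(z)]$ in $\mathbf{PSL}(2,q)$ equals $|z|/\gcd(|z|,2)$, which divides $(q+1)/2$. Since $q\equiv 1\pmod 4$ makes $(q+1)/2$ odd, the order of $[c_4(z)]$ is odd, and Lemma~\ref{lem:inversos} yields the same contradiction as for the unipotent classes.

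For $\mathcal{C}_5$ the representative has order $2$, so real or quasireal arguments are powerless. Since all involutions of $\mathbf{PSL}(2,q)$ lie in $\mathcal{C}_5$, it suffices to embed $\mathbb{A}_4$ into $\mathbf{PSL}(2,q)$ and apply Lemma~\ref{lem:triangulitos}(2) together with Remark~\ref{rem:tbym}. For $q=5$ this is immediate from $\mathbf{PSL}(2,5)\simeq\mathbb{A}_5$; for larger $q\equiv 1\pmod 4$ the embedding is standard via Dickson's classification of subgroups of $\mathbf{PSL}(2,q)$, or one can exhibit explicit matrices $g_0,x\in\mathbf{PSL}(2,q)$ with $g_0^2=x^3=(g_0x)^3=1$ and apply Lemma~\ref{lem:triangulitos}(3). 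The main obstacle is the careful bookkeeping around $\mathcal{C}_4$, in particular the descent from $\mathbf{SL}(2,q)$ to $\mathbf{PSL}(2,q)$ and the parity calculation that crucially uses $q\equiv 1\pmod 4$.
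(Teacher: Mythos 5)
Your proposal is correct and follows essentially the same route as the paper: $\mathcal{C}_1$ is trivial, $\mathcal{C}_2,\mathcal{C}_3,\mathcal{C}_4$ are handled by showing they are real and combining Lemma~\ref{lem:inversos} with the odd order of their representatives ($p$ for the unipotent classes, a divisor of $(q+1)/2$ for $\mathcal{C}_4$), and $\mathcal{C}_5$ by embedding $\mathbb{A}_4$ into $\mathbf{PSL}(2,q)$ and invoking Lemma~\ref{lem:triangulitos} together with Remark~\ref{rem:tbym}. The only (immaterial) difference is how reality is certified: you argue via square classes of the upper-right entry and via characteristic polynomials, whereas the paper exhibits explicit conjugating elements (e.g.\ $c_5c_4(z)c_5^{-1}=c_4(z^{-1})$).
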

\begin{proof}
	The trivial class is \bu\ for any group.
	For class $\mathcal{C}_2$ (resp. $\mathcal{C}_3$) we apply
	Lemma~\ref{lem:inversos}, since $c_2$ and $c_2^{-1}$ (resp. $c_3$ and
	$c_3^{-1}$) are conjugate and have odd order $p$.

	For $i=4$, we notice that
	$c_4(z)^{-1}=c_4(z^{-1})
		= \left( \begin{smallmatrix} x&-\Delta y\\-y&x \end{smallmatrix} \right)$,
	which is conjugate to $c_4(z)$. Indeed,
	$c_5c_4(z)c_5^{-1}=c_4(z^{-1})$. Then the result follows from
	Lemma~\ref{lem:inversos}, since the order of $c_4(z)$ is a divisor of
	$\frac{q+1}{2}$, which is odd.

	For $i=5$, we notice that $\mathbb{A}_4$ is a subgroup of
	$\mathbf{PSL}(2,q)$ (see \cite[Ch. XII]{MR0104735} or \cite[Satz II.8.18
	(b)]{MR0224703}). Since the only involutions in $\mathbf{PSL}(2,q)$ lie in
	class $\mathcal{C}_5$, Lemma~\ref{lem:triangulitos} part~\ref{lem:trip2} and
	Remark~\ref{rem:tbym} say that $\mathcal{C}_5$ is \bu.
\end{proof}

The centralizer of $c_6(x)$ is isomorphic to $\FF_q^*/\{\pm 1\}\simeq$
\corr{$\mathbb{Z}_{\frac{q-1}{2}}$}{$\mathbb{Z}/{\frac{q-1}{2}}$}.
\begin{prop}
	\label{pro:psl_c6_q1}
	If $\dim\mathfrak{B}(C_{6},\rho)<\infty$ then
	$c_{6}(x)$ has even order and $\rho(c_6(x))=-1$.
\end{prop}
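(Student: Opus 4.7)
The plan is to exhibit $\mathcal{C}_6$ as a real class, apply Lemma~\ref{lem:inversos} to obtain $\rho(c_6(x))=-1$, and then deduce the parity of the order of $c_6(x)$ for free from the fact that the centralizer is cyclic.

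First, I would verify realness directly. The element $w=\left(\begin{smallmatrix}0&1\\-1&0\end{smallmatrix}\right)$ lies in $\mathbf{SL}(2,q)$ and thus projects to an element of $\mathbf{PSL}(2,q)$; a one-line $2\times 2$ computation gives $w\,c_6(x)\,w^{-1}=c_6(x^{-1})=c_6(x)^{-1}$, so $c_6(x)^{-1}\in\mathcal{C}_6$. (Note this is independent of the hypothesis $x\notin\{\pm1,\pm i\}$, which only serves to distinguish $\mathcal{C}_6$ from classes $\mathcal{C}_1$ and $\mathcal{C}_5$.)

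Next, Lemma~\ref{lem:inversos} applied under the finite-dimensionality hypothesis immediately yields $\rho(c_6(x))=-1$.

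Finally, since $C_G(c_6(x))\simeq\mathbb{F}_q^*/\{\pm1\}$ is cyclic of order $(q-1)/2$, every irreducible representation $\rho$ over the algebraically closed field $k$ is one-dimensional, hence a character. Letting $m$ be the order of $c_6(x)$ in $\mathbf{PSL}(2,q)$, we compute $1=\rho(c_6(x)^m)=\rho(c_6(x))^m=(-1)^m$, which forces $m$ to be even. The proof involves no serious obstacle; it is essentially a direct application of the tools assembled in Section~2 to a class whose realness is transparent from the existence of the Weyl-type element $w$.
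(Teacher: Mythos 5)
Your proof is correct and follows the same route as the paper: both exhibit $\mathcal{C}_6$ as a real class by conjugating $c_6(x)$ with the (projective class of the) matrix $\left(\begin{smallmatrix}0&-1\\1&0\end{smallmatrix}\right)$ and then invoke Lemma~\ref{lem:inversos}. Your explicit derivation of the even-order condition from $(-1)^m=1$ is a detail the paper leaves implicit, but it is the intended argument.
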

\begin{proof}
	Again, $\mathcal{C}_6$ is a real class, since conjugating $c_6(x)$ by
	$\left( \begin{smallmatrix} &-1\\1& \end{smallmatrix} \right)$ gives
	$c_6(x^{-1})=c_6(x)^{-1}$. The result then follows from Lemma~\ref{lem:inversos}.
\end{proof}

\subsection{The case $q\equiv3(4)$}
There are \corr{$\frac{q-5}{2}$}{$\frac{q+5}{2}$} conjugacy classes, which are
listed in Table~\ref{tab:psl_q3} (see \cite{jeffreyadams}).

\begin{prop}\label{pro:psl_c12356_q3}
	Classes $\mathcal{C}_i$ are \bu\ for $i=1,2,3,5,6$.
\end{prop}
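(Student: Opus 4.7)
The plan is to handle each of the five classes separately, following the template of Proposition~\ref{pro:psl_c12345_q1} but adjusting for the fact that $-1\in\FF_q$ is now a non-square. The trivial class $\mathcal{C}_1$ is always \bu. Class $\mathcal{C}_5$ requires no modification: by Dickson's theorem $\mathbb{A}_4$ embeds in $\mathbf{PSL}(2,q)$ for every odd $q$, and since for $q\equiv 3\pmod 4$ the eigenvalues $\pm i$ of an order-$4$ element of $\mathbf{SL}(2,q)$ do not lie in $\FF_q$, the class $\mathcal{C}_5$ remains the unique class of involutions in $\mathbf{PSL}(2,q)$; hence the involutions of $\mathbb{A}_4$ fall into $\mathcal{C}_5$, and Lemma~\ref{lem:triangulitos} part~\ref{lem:trip2}, combined with Remark~\ref{rem:tbym}, concludes.

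For $\mathcal{C}_6$ the class is real, since conjugating $c_6(x)$ by $\left(\begin{smallmatrix}0 & -1\\1 & 0\end{smallmatrix}\right)$ swaps the diagonal entries and produces $c_6(x^{-1})=c_6(x)^{-1}$. The centralizer of $c_6(x)$ in $\mathbf{PSL}(2,q)$ is cyclic of order $(q-1)/2$, which is odd because $q\equiv 3\pmod 4$, so $c_6(x)$ has odd order. Thus $\rho(c_6(x))\ne -1$ automatically, and Lemma~\ref{lem:inversos} yields $\dim\mathfrak{B}(\mathcal{C}_6,\rho)=\infty$.

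The main obstacle is $\mathcal{C}_2$ (and symmetrically $\mathcal{C}_3$), where the reality argument of Proposition~\ref{pro:psl_c12345_q1} breaks down: since $-1$ is now a non-square, $c_2^{-1}=\left(\begin{smallmatrix}1 & -1\\0 & 1\end{smallmatrix}\right)$ is $\mathbf{SL}$-conjugate to $\left(\begin{smallmatrix}1 & \Delta\\0 & 1\end{smallmatrix}\right)$ and therefore lies in $\mathcal{C}_3$ rather than $\mathcal{C}_2$. The remedy is to replace reality by quasireality: a direct computation gives $\mathrm{diag}(a,a^{-1})\,c_2\,\mathrm{diag}(a,a^{-1})^{-1}=c_2^{a^2}$, so $c_2^n\in\mathcal{C}_2$ for every nonzero square $n\in\FF_q$. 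Taking $n=4$, Lemma~\ref{lem:potencias} forces $\rho(c_2)\in\{-1\}\cup\mathcal{R}_3$ whenever $\mathfrak{B}(\mathcal{C}_2,\rho)$ is finite dimensional; since $c_2$ has order $p$, neither value is attainable unless $p=3$. The only point where the plan is not fully decisive is the residual characteristic-$3$ case: all nonzero squares of $\FF_q$ reduce to $1\pmod 3$, so even the stronger clause of Lemma~\ref{lem:potencias} cannot be invoked, and one would need either a finer rack argument or a direct subgroup embedding to close it. The class $\mathcal{C}_3$ is handled identically, since the same conjugation shows $c_3^n\in\mathcal{C}_3$ for every nonzero square $n$.
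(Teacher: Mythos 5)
Your handling of $\mathcal{C}_1$, $\mathcal{C}_5$ and $\mathcal{C}_6$ agrees with the paper, whose entire proof is the remark that the argument of Proposition~\ref{pro:psl_c12345_q1} goes through with $\mathcal{C}_6$ now playing the role of the odd-order real class $\mathcal{C}_4$. Where you diverge is on $\mathcal{C}_2$ and $\mathcal{C}_3$, and the divergence is not optional: as you observe, for $q\equiv3\pmod4$ these classes are not real (inversion interchanges them, $-1$ being a non-square), so the appeal to Lemma~\ref{lem:inversos} that works for $q\equiv1\pmod4$ is unavailable here, and the paper's ``entirely analogous'' glosses over this. Your replacement --- conjugation by $\operatorname{diag}(a,a^{-1})$ shows $c_2^{n}\in\mathcal{C}_2$ for every nonzero square $n$, so $c_2^4\in\mathcal{C}_2$ and Lemma~\ref{lem:potencias} forces $\rho(c_2)\in\{-1\}\cup\mathcal{R}_3$, which is incompatible with $c_2$ having odd order $p$ unless $p=3$ --- is the correct repair, and you are also right that $p=5$ never meets the hypothesis $q\equiv3\pmod4$.

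The gap you flag at $p=3$ is genuine and cannot be closed with the lemmas of this paper. For $q=3$ one has $\mathbf{PSL}(2,3)\simeq\mathbb{A}_4$, and $\mathcal{C}_2$, $\mathcal{C}_3$ are the two classes of $3$-cycles; whether $\toba(\mathcal{C},\rho)$ is finite dimensional when $\rho(g)\in\mathcal{R}_3$ is precisely the tetrahedral-rack case that the cited references leave unresolved, so the statement should at least exclude $q=3$. For $q=3^k$ with $k$ odd, $k\ge3$, the only powers of $c_2$ are $1$, $c_2$ and $c_2^{-1}\in\mathcal{C}_3$, so neither reality nor quasireality yields anything and a different technique is required. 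Be aware, though, that this defect is already present in the paper's own one-line proof, which tacitly assumes $\mathcal{C}_2$ and $\mathcal{C}_3$ are real; your version is the more accurate account of what the stated lemmas actually establish.
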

\begin{proof}
	It is entirely analogous to that of Proposition~\ref{pro:psl_c12345_q1}.
	The only difference is that now classes $\mathcal{C}_6$ have elements of odd order,
	instead of classes $\mathcal{C}_4$.
\end{proof}

The centralizer of $c_4(x)$ is isomorphic to \corr{$\mathbb{Z}_\frac{q+1}{2}$}{$\mathbb{Z}/\frac{q+1}{2}$}.
Again, by using Lemma~\ref{lem:inversos} we have
\begin{prop}\label{pro:psl_c6_q3}
	If $\dim\mathfrak{B}(\mathcal{C}_4,\rho)<\infty$, then $c_4(z)$ has even order and $\rho(c_4(z))=-1$.
\end{prop}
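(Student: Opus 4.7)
The plan is to follow exactly the template already used for $\mathcal{C}_4$ in Proposition~\ref{pro:psl_c12345_q1}: once I have shown that $\mathcal{C}_4$ is a real class in $\mathbf{PSL}(2,q)$, Lemma~\ref{lem:inversos} immediately yields $\rho(c_4(z))=-1$, and the claim about the order follows at once, since if $n$ denotes the order of $c_4(z)$ in $\mathbf{PSL}(2,q)$ then applying $\rho$ to $c_4(z)^n=1$ gives $(-1)^n=1$, forcing $n$ to be even. Thus the entire content of the proposition reduces to showing that $\mathcal{C}_4$ is real.

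Reality is less obvious here than in the $q\equiv1(4)$ case, because the conjugator $c_5=\operatorname{diag}(i,-i)$ used there no longer lies in $\mathbf{SL}(2,\FF_q)$ when $q\equiv3(4)$, since $-1\notin (\FF_q)^2$. Writing the inverse explicitly as $c_4(z)^{-1}=c_4(\bar z)=\left(\begin{smallmatrix} x & -\Delta y \\ -y & x \end{smallmatrix}\right)$, I would look for a conjugator of the form
\[
 P=\begin{pmatrix} a & -c\Delta \\ c & -a \end{pmatrix}, \qquad a,c\in\FF_q,
\]
an ansatz naturally suggested by the observation that any non-central element of the normalizer of the non-split torus ought to act on it through the Galois involution $z\mapsto\bar z$. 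A short matrix computation then shows that $P\,c_4(z)=c_4(\bar z)\,P$ holds for every such $a,c$, so the only remaining requirement is $\det P=c^2\Delta-a^2=1$.

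The last step is to verify that the equation $c^2\Delta-a^2=1$ has a solution in $\FF_q$. This is the same as asking that $-1$ lie in the image of the norm form $N(a+\delta c)=a^2-\Delta c^2$ on $E$ over $\FF_q$. Since $N\colon E^{\ast}\to\FF_q^{\ast}$ is surjective (by standard facts on finite extensions, or by comparing orders using $|\ker N|=q+1$), there exists $w=a+\delta c\in E^{\ast}$ with $N(w)=-1$, and the associated $P$ supplies the required conjugator; its class in $\mathbf{PSL}(2,q)$ sends $c_4(z)$ to $c_4(z)^{-1}$, proving reality of $\mathcal{C}_4$. The only real obstacle is hitting upon the correct ansatz for $P$; once that is in hand everything else is a short calculation plus surjectivity of the norm.
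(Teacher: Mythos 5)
Your proof is correct and follows the same route as the paper: establish that $\mathcal{C}_4$ is real and then invoke Lemma~\ref{lem:inversos}, with evenness of the order following from $(-1)^{\operatorname{ord}(c_4(z))}=1$. The paper leaves the reality of the class implicit (it only says ``again, by using Lemma~\ref{lem:inversos}''), so your explicit conjugator $P$ with $\det P=-N(a+\delta c)=1$, obtained from surjectivity of the norm $N\colon E^{*}\to\FF_q^{*}$, is a welcome verification of the step the paper glosses over.
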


\section{Nichols algebras over $\mathbf{PGL}(2,q)$}\label{se:pgl2q}
We deal in this section with $\mathbf{PGL}(2,q)=\mathbf{GL}(2,q)/\{tI:t\in\mathbb{F}_q^\times\}$,
which has order $(q-1)q(q+1)$.
Since $\mathbf{PGL}(2,3)\simeq\mathbb{S}_{4}$ and
$\mathbf{PGL}(2,5)\simeq\mathbb{S}_5$ (which are treated in \cite{ARXIV:0511020})
we may assume that $q>5$. There are $q+2$
conjugacy classes, which are listed in Table~\ref{tab:pgl} (see \cite{jeffreyadams}).

\begin{table}[h]
	\caption{Conjugacy classes of $\mathbf{PGL}(2,q)$}
	\label{tab:pgl}
	\begin{tabular}{|c|c|c|c|}
		\hline 
		& Representative & Number & Size\tabularnewline
		\hline 
		$\mathcal{C}_{1}$ & $c_{1}=\left(\begin{array}{cc} 1 \\
			& 1\end{array}\right)$ & $1$ & $1$\tabularnewline
			\hline 
		$\mathcal{C}_{2}$ & $c_{2}=\left(\begin{array}{cc} 1 & 1\\
			& 1\end{array}\right)$ & $1$ & $q^{2}-1$\tabularnewline
			\hline 
		$\mathcal{C}_{3}$
			& $c_{3}(x)=\left(\begin{array}{cc} x\\ & 1\end{array}\right)$ $(x\ne\pm1)$
			& $\frac{q-3}{2}$ & $q(q+1)$\tabularnewline
			\hline 
		$\mathcal{C}_{4}$
			& $c_{4}=\left(\begin{array}{cc} -1\\ & 1\end{array}\right)$
			& $1$ & $\frac{q(q+1)}{2}$\tabularnewline
			\hline 
		$\mathcal{C}_{5}$
			& $c_{5}(z)=\left(\begin{array}{rr} x & \Delta y \\ y & x \end{array}\right)$ ($xy\neq 0$)
			& $\frac{q-1}{2}$ & $q(q-1)$\tabularnewline \hline 
			%& $c_{5}(x)=\left(\begin{array}{cc} & -x\overline{x}\\ 1 & x+\overline{x}\end{array}\right)$
			%$(x\in E\backslash\mathbb{F}_{q},\,x+\overline{x}\ne0)$
			%& $\frac{q-1}{2}$ & $q(q-1)$\tabularnewline \hline 
		$\mathcal{C}_{6}$
			& $c_{6}=\left(\begin{array}{cc} & \Delta \\ 1\end{array}\right)$
			& $1$ & $\frac{q(q-1)}{2}$\tabularnewline
			\hline
	\end{tabular}
\end{table}

By considering the inclusion $\mathbf{PSL}(2,q)\hookrightarrow \mathbf{PGL}(2,q)$,
we get from the previous section that the following classes are \bu:
\begin{align*}
	&\mathcal{C}_1,\,\mathcal{C}_2 \\
	&\mathcal{C}_3 \quad\text{if } q\equiv 3\pmod 4 \text{ and } x\in\FF_q^2 \\
	&\mathcal{C}_4 \quad\text{if } q\equiv 1\pmod 4 \text{ (because it is class
			$\mathcal{C}_5$ in $\mathbf{PSL}(2,q)$)} \\
	&\mathcal{C}_5 \quad\text{if } q\equiv 1\pmod 4 \text{ and } N(z)=x^2-\Delta y^2\in\FF_q^2 \\
	&\mathcal{C}_6 \quad\text{if } q\equiv 3\pmod 4 \text{ (because it is class
			$\mathcal{C}_5$ in $\mathbf{PSL}(2,q)$)}
\end{align*}

We consider now the other cases. They are not \bu, but one can restrict the
representations anyway.

The centralizer of $c_3(x)$ is given by the classes in $\mathbf{PGL}(2,q)$ of
matrices $\left( \begin{smallmatrix}\sigma&\\&1\end{smallmatrix} \right)$
and therefore it is isomorphic to $\mathbb{F}_q^\times$.
\begin{prop}\label{pro:pgl_c3}
	Let $q\equiv 1\pmod 4$.
	%and let $\rho$ be a representation of $C_{\mathbf{PGL}(2,q)}(c_3(x))$.
	If $\dim\mathfrak{B}(\mathcal{C}_{3},\rho)<\infty$, then $x$ has even order
	and $\rho(c_3(x))=-1$.
\end{prop}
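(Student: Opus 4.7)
The plan is to reduce Proposition~\ref{pro:pgl_c3} to Lemma~\ref{lem:inversos} by showing that $\mathcal{C}_3$ is a real conjugacy class, and then to extract the parity condition on $\mathrm{ord}(x)$ from the cyclic structure of the centralizer.

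First I would check that $\mathcal{C}_3$ is real in $\mathbf{PGL}(2,q)$. The obvious candidate to conjugate with is the class of the Weyl element $w=\left(\begin{smallmatrix}0&1\\1&0\end{smallmatrix}\right)$. A direct computation gives $w\,\mathrm{diag}(x,1)\,w^{-1}=\mathrm{diag}(1,x)$, and in $\mathbf{PGL}(2,q)$ we can rescale by $x^{-1}$ to identify $\mathrm{diag}(1,x)$ with $\mathrm{diag}(x^{-1},1)=c_3(x)^{-1}$. Hence $c_3(x)^{-1}\in\mathcal{C}_3$, and Lemma~\ref{lem:inversos} applies: if $\dim\mathfrak{B}(\mathcal{C}_3,\rho)<\infty$ then $\rho(c_3(x))=-1$.

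Next I would derive the parity statement from the fact, stated just before the proposition, that $C_G(c_3(x))\simeq\mathbb{F}_q^\times$ is cyclic of order $q-1$. Under this isomorphism $c_3(x)$ corresponds to the element $x$, so its order in $\mathbf{PGL}(2,q)$ equals $\mathrm{ord}(x)$; indeed $\mathrm{diag}(x^n,1)$ represents the identity in $\mathbf{PGL}(2,q)$ exactly when it is scalar, i.e.\ when $x^n=1$. Applying $\rho$ to the equation $c_3(x)^{\mathrm{ord}(x)}=1$ and using $\rho(c_3(x))=-1$ gives $(-1)^{\mathrm{ord}(x)}=1$, which forces $\mathrm{ord}(x)$ to be even, as required.

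There is no real obstacle here; the only thing to be careful about is that the realness and the order computation are done inside $\mathbf{PGL}(2,q)$, not inside $\mathbf{GL}(2,q)$, since otherwise $c_3(x)$ and $c_3(x^{-1})$ would not be conjugate. The hypothesis $q\equiv 1\pmod 4$ does not play a role in the argument itself; it simply labels the case in which this partial restriction on $\rho$ is all that the general-position lemmas deliver, the complementary case $q\equiv3\pmod 4$ with $x$ a square already being covered by the inclusion $\mathbf{PSL}(2,q)\hookrightarrow\mathbf{PGL}(2,q)$ and Remark~\ref{rem:tbym}.
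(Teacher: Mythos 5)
Your proof is correct and follows exactly the paper's route: the paper's entire proof is ``Notice that $\mathcal{C}_3$ is a real class, and use Lemma~\ref{lem:inversos}'', and you have simply supplied the details (the Weyl-element conjugation establishing realness, and the standard deduction that $\rho(c_3(x))=-1$ together with $c_3(x)^{\operatorname{ord}(x)}=1$ forces $\operatorname{ord}(x)$ even). Your closing remark about the role of the hypothesis $q\equiv1\pmod4$ is also consistent with how the paper organizes the cases.
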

\begin{proof}
	Notice that $\mathcal{C}_3$ is a real class, and use Lemma~\ref{lem:inversos}.
\end{proof}

The centralizer of $c_4$ is given by the classes in $\mathbf{PGL}(2,q)$ of
matrices
$\left( \begin{smallmatrix} 1&\\&\sigma \end{smallmatrix} \right)$ and
$\left( \begin{smallmatrix} &1\\\sigma& \end{smallmatrix} \right)$ for
$\sigma\in\mathbb{F}_q^\times$. It is easy to see that this group is isomorphic
to $\mathbb{D}_{q-1}$. In the presentation by $r,s$ given at
\S\ref{sub:repdihe}, $c_4$ corresponds to $r^{\frac{q-1}{2}}$.

\begin{prop}\label{pro:pgl_c4}
	Let $q\equiv3\pmod 4$ and let $\rho$ be a representation of
	$C_{\mathbf{PGL}(2,q)}(c_4)$ with character $\chi$.
	If $\dim\mathfrak{B}(\mathcal{C}_4)<\infty$, then
	%$\chi\in\{\chi_{3},\chi_{4},\mu_{h}\}$ with $0<h<\frac{q-1}{2}$ an odd number.
	$\chi\in\{\chi_{3},\chi_{4},\mu_{h}\text{ ($h$ odd)}\}$ (see
	Table~\ref{tab:diedral}).
\end{prop}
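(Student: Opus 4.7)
The plan is to reduce the proposition to a direct inspection of the character table of $\mathbb{D}_{q-1}$ (Table~\ref{tab:diedral}) using the fact that $c_4$ is an involution.

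First, I would observe that $\mathcal{C}_4$ is real in the strongest possible sense: $c_4^{-1}=c_4\in\mathcal{C}_4$. Hence by Lemma~\ref{lem:inversos}, under the hypothesis $\dim\mathfrak{B}(\mathcal{C}_4,\rho)<\infty$ we must have $\rho(c_4)=-1$. Since $c_4$ corresponds to $r^{(q-1)/2}$ in the dihedral presentation, and this element is central in $C_{\mathbf{PGL}(2,q)}(c_4)\simeq\mathbb{D}_{q-1}$, Schur's lemma already guarantees that $\rho(c_4)$ is scalar. Putting $n=q-1$, the scalar equation $\rho(r^{n/2})=-1$ translates into the character condition $\chi(r^{n/2})=-\deg(\chi)$.

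Next, I would use the hypothesis $q\equiv 3\pmod 4$, which gives $n/2=(q-1)/2$ odd. Then I would read off from Table~\ref{tab:diedral} the value of each irreducible character at $r^{n/2}$:
\begin{itemize}
\item $\chi_1(r^{n/2})=\chi_2(r^{n/2})=1\neq -1$, so these are excluded;
\item $\chi_3(r^{n/2})=\chi_4(r^{n/2})=(-1)^{n/2}=-1$, so both are allowed;
\item $\mu_h(r^{n/2})=2\cos(\pi h)=2(-1)^h$, which equals $-\deg(\mu_h)=-2$ precisely when $h$ is odd.
\end{itemize}
This exactly produces the list $\{\chi_3,\chi_4,\mu_h\ (h\text{ odd})\}$.

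There is no genuine obstacle here: once one recognizes that $c_4$ sits as the unique central involution $r^{n/2}$ of $\mathbb{D}_{q-1}$ and that $(q-1)/2$ is odd, everything reduces to evaluating characters. The only place where the residue class of $q$ mod $4$ actually enters is in controlling the parity of $n/2$, which switches $\chi_3,\chi_4$ from forbidden (in the $q\equiv 1\pmod 4$ case, which was already handled by the embedding $\mathbf{PSL}(2,q)\hookrightarrow\mathbf{PGL}(2,q)$) to allowed; this is the reason the proposition is stated only for $q\equiv 3\pmod 4$.
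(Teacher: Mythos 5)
Your proposal is correct and follows essentially the same route as the paper: since $c_4$ is an involution, Lemma~\ref{lem:inversos} forces $\rho(c_4)=-1$, and the list of admissible characters is then read off from Table~\ref{tab:diedral} using that $c_4=r^{(q-1)/2}$ with $(q-1)/2$ odd. You merely make explicit the details (centrality of $r^{n/2}$, Schur's lemma for $\mu_h$, and the parity computation) that the paper compresses into ``by inspection of the Table.''
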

\begin{proof}
	Since $c_4$ is an involution, we must have $\rho(c_4)=-1$. Then, the result
	follows by inspection of the Table.
\end{proof}

The centralizer of $c_5(z)$ is isomorphic to $E^*/\FF_q^*\simeq\mathbb{Z}/{q+1}$
(see \cite{jeffreyadams}).
\begin{prop}\label{pro:pgl_c5}
	Let $q\equiv 3\pmod 4$. If $\dim\mathfrak{B}(\mathcal{C}_{5},\rho)<\infty$,
	then $c_{5}(z)$ has even order and $\rho(g)=-1$.
\end{prop}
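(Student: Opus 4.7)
The plan is to follow the template of Propositions~\ref{pro:psl_c6_q1} and \ref{pro:pgl_c3}: show that $\mathcal{C}_{5}$ is a real class in $\mathbf{PGL}(2,q)$, invoke Lemma~\ref{lem:inversos} to force $\rho(c_{5}(z))=-1$, and then extract the parity of the order of $c_{5}(z)$ from the cyclicity of the centralizer.

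First I would exhibit an explicit conjugator to the inverse. Take $P=\left(\begin{smallmatrix}1&0\\0&-1\end{smallmatrix}\right)\in\mathbf{GL}(2,q)$ and compute
\[
P\,c_{5}(z)\,P^{-1}=\left(\begin{smallmatrix}x&-\Delta y\\-y&x\end{smallmatrix}\right)=c_{5}(\bar z),
\]
while a direct multiplication yields $c_{5}(z)\,c_{5}(\bar z)=(x^{2}-\Delta y^{2})\,I=N(z)\,I$. Since $N(z)\in\FF_{q}^{\times}$, the image of $c_{5}(\bar z)$ in $\mathbf{PGL}(2,q)$ coincides with that of $c_{5}(z)^{-1}$, so $c_{5}(z)^{-1}\in\mathcal{C}_{5}$; that is, $\mathcal{C}_{5}$ is real.

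Lemma~\ref{lem:inversos} then immediately gives $\rho(c_{5}(z))=-1$. Since $C_{\mathbf{PGL}(2,q)}(c_{5}(z))\simeq\mathbb{Z}/(q+1)$ is cyclic and $\rho$ is a one-dimensional character that sends $c_{5}(z)$ to an element of multiplicative order $2$, the order of $c_{5}(z)$ inside this cyclic group must be even: otherwise $(-1)^{\mathrm{ord}(c_{5}(z))}=\rho(c_{5}(z))^{\mathrm{ord}(c_{5}(z))}=1$ would force $\mathrm{ord}(c_{5}(z))$ to be even, a contradiction.

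There is no real obstacle: once the right conjugator is identified, the argument is a routine adaptation of the earlier propositions. The only subtle point is that the inverse of $c_{5}(z)$ is not obviously of the form $c_{5}(z')$, but this is resolved by clearing the scalar $N(z)\in\FF_{q}^{\times}$ when passing from $\mathbf{GL}(2,q)$ to $\mathbf{PGL}(2,q)$.
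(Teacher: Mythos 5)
Your argument is correct and is essentially the paper's own proof: the paper also shows $\mathcal{C}_5$ is real by conjugating $c_5(z)$ to $c_5(\bar z)=N(z)\,c_5(z)^{-1}$ and then applies Lemma~\ref{lem:inversos}, the only cosmetic difference being that your conjugator $\left(\begin{smallmatrix}1&0\\0&-1\end{smallmatrix}\right)$ coincides with the paper's $c_4=\left(\begin{smallmatrix}-1&0\\0&1\end{smallmatrix}\right)$ modulo scalars in $\mathbf{PGL}(2,q)$. The closing sentence about parity is phrased in a slightly circular way, but the underlying point ($\rho(g)=-1$ forces $g$ to have even order) is trivially right.
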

\begin{proof}
	In a similar way as the proof of Prop.~\ref{pro:psl_c12345_q1}, we see that
	classes $\mathcal{C}_5$ are real by conjugating $c_5(z)$ with $c_4$. Now, we
	apply Lemma~\ref{lem:inversos}.
\end{proof}

The centralizer of $c_6$ is given by the classes in $\mathbf{PGL}(2,q)$ of matrices
$\left(\begin{smallmatrix} x & \pm\Delta y \\ y & x \end{smallmatrix}\right)$.
It is easy to see that this group is isomorphic to $\mathbb{D}_{q+1}$. In the
presentation by $r,s$ given at \S\ref{sub:repdihe}, $c_6$ corresponds to
$r^{\frac{q+1}{2}}$.

\begin{prop}\label{pro:pgl_c6}
	Let $q\equiv1\pmod4$ and let $\chi$ be the character of the representation
	$\rho$. If $\dim\mathfrak{B}(\mathcal{C}_6)<\infty$, then
	$\chi\in\{\chi_{3},\chi_{4},\mu_{h}\text{ ($h$ odd)}\}$ (see
	Table~\ref{tab:diedral}).
\end{prop}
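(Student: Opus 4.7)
The plan is to imitate the strategy of Proposition~\ref{pro:pgl_c4} verbatim. The first observation is that $c_6^2=\bigl(\begin{smallmatrix}\Delta&\\&\Delta\end{smallmatrix}\bigr)=\Delta I$, which is the identity in $\mathbf{PGL}(2,q)$. Hence $c_6$ is an involution and $\mathcal{C}_6$ is a real class (it contains its own inverse trivially), so Lemma~\ref{lem:inversos} forces $\rho(c_6)=-1$. Since $c_6$ lies in the center of its centralizer, $\rho(c_6)$ is a scalar by Schur's lemma, so this equation makes sense regardless of $\dim\rho$.

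Next I would translate the constraint $\rho(c_6)=-1$ via the identification of $C_{\mathbf{PGL}(2,q)}(c_6)$ with $\mathbb{D}_{q+1}$ recalled just before the statement, under which $c_6$ corresponds to $r^{(q+1)/2}$. Setting $n=q+1$ (which is even since $q$ is odd, putting us in the regime of Table~\ref{tab:diedral}), the hypothesis $q\equiv1\pmod 4$ gives $(q+1)/2$ odd; this is the only place the congruence is used.

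Then I would evaluate each row of Table~\ref{tab:diedral} at $r^{n/2}$:
\begin{align*}
\chi_1(r^{n/2})&=1, & \chi_2(r^{n/2})&=1, \\
\chi_3(r^{n/2})&=(-1)^{n/2}=-1, & \chi_4(r^{n/2})&=(-1)^{n/2}=-1, \\
\mu_h(r^{n/2})&=2\cos(\pi h)=2(-1)^h. &&
\end{align*}
For a one-dimensional representation $\rho(c_6)=-1$ picks out exactly $\chi_3$ and $\chi_4$. For the two-dimensional $\mu_h$, the scalar $\rho(c_6)=-1$ gives trace $-2$, i.e. $2(-1)^h=-2$, so $h$ must be odd. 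Collecting these yields the allowed list $\{\chi_3,\chi_4,\mu_h\text{ ($h$ odd)}\}$.

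There is no real obstacle here: once one checks the elementary computation $c_6^2=\Delta I\equiv I$ in $\mathbf{PGL}(2,q)$ and uses $q\equiv1\pmod4$ to fix the parity of $(q+1)/2$, the proposition reduces to reading off Table~\ref{tab:diedral}, exactly as Proposition~\ref{pro:pgl_c4} reduces to the analogous inspection for $\mathbb{D}_{q-1}$.
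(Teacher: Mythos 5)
Your proof is correct and follows exactly the paper's route: the paper's proof of this proposition is simply ``analogous to Proposition~\ref{pro:pgl_c4},'' i.e.\ observe that $c_6$ is an involution in $\mathbf{PGL}(2,q)$ (since $c_6^2=\Delta I$), deduce $\rho(c_6)=-1$, and read off Table~\ref{tab:diedral} at $r^{(q+1)/2}$. You have merely written out in full the table inspection that the paper leaves implicit.
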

\begin{proof}
	Analogous to that of Prop.~\ref{pro:pgl_c4}.
\end{proof}

\section{The class of order $4$ in $\mathbf{SL}(2,q)$}
In this section we use the tools in the present paper to improve a result in
\cite{ARXIV:0703498}. There, we considered in $\mathbf{SL}(2,q)$ the classes
$\mathcal{C}_7$ of
$c_7(x)=\left( \begin{smallmatrix} x&0 \\ 0&x^{-1} \end{smallmatrix} \right)$,
and $\mathcal{C}_8$ of
$c_8(z)=\left( \begin{smallmatrix} x&\Delta y \\ y&x \end{smallmatrix} \right)$,
where $z=x+\delta y\in E\setminus\FF_q$.  Propositions~3.4 (resp. 3.5) in
\cite{ARXIV:0703498} prove that in order for $\toba(\mathcal{C}_7)$
(resp. $\toba(\mathcal{C}_8)$) to be
finite dimensional, the order of $x$ (resp. $z$) must be even.  On the other hand, when
$q=3$ there is only one class $\mathcal{C}_8$, and it is proved in \cite{afgv}
that it is \bu.  We prove here that $\mathbf{SL}(2,3)$ is a subgroup of
$\mathbf{SL}(2,q)$. This implies that when $q\equiv1\pmod4$ and $x$ has order
$4$, the class of $c_7(x)\in\mathbf{SL}(2,q)$ is \bu, and when $q\equiv3\pmod4$
and $z$ has order $4$, the class of $c_8(z)\in\mathbf{SL}(2,q)$ is \bu.

One possible presentation of $\mathbf{SL}(2,3)$ is given by generators $x,y$ and
relations $x^{3}=y^{4}=(xy^{3})^{3}=1$, $xy^{2}=y^{2}x$. In fact, one can take
\[
	x = \begin{pmatrix} 1 & 1 \\ 0 & 1 \end{pmatrix}, \qquad
	y = \begin{pmatrix} 1 & 1 \\ 1 & 2 \end{pmatrix}.
\]
Then, if a group $G$ has elements $A,B,C$ of orders $3$,
$3$ and $4$ respectively such that $ABC=[B,C^2]=1$, then $G$ has a subgroup
isomorphic to $\mathbf{SL}(2,3)$. Indeed, take the map $x\mapsto B$, $y\mapsto
C^3$. It is easy to see that this defines a map $\mathbf{SL}(2,3)\to G$. It is
injective because otherwise the orders of $A$, $B$ or $C$ would be smaller than
stated, as can be seen by considering the normal subgroups of $\mathbf{SL}(2,3)$
(the non-trivial ones being isomorphic to $C_2$ and the quaternion group $Q$ of
order $8$). Also, any $C\in\mathbf{SL}(2,q)$ has order $4$ if and only if
$\tr C=0$. Indeed, it is easy to see that if $\alpha=\tr C$, then
$C^4=(\alpha^3-2\alpha)C+(1-\alpha^2)$, which implies the claim. Therefore,
there is only one class in $\mathbf{SL}(2,q)$ of order $4$. This means that the
embedding we shall find $\mathbf{SL}(2,3)\hookrightarrow\mathbf{SL}(2,q)$ sends
the class of
$\left( \begin{smallmatrix} 0&-1 \\ 1&0 \end{smallmatrix} \right)\in\mathbf{SL}(2,3)$
\bu\ to it.

We prove now the existence of the embedding
$\mathbf{SL}(2,3)\hookrightarrow\mathbf{SL}(2,q)$.  Since $\mathbf{SL}(2,p)$
embeds in $\mathbf{SL}(2,q)$, it is enough to prove that $\mathbf{SL}(2,3)$
embeds in $\mathbf{SL}(2,p)$. Let $x\in\FF_p^\times$ be of
order $p-1$ and let $\mathcal{G}_l$ be the class of $c_7(x^l)$. Let $z\in E$
generate the group $\{a+\delta b\in E\;|\;a^2-\Delta b^2=1\}$ and let
$\mathcal{H}_m$ be the class of $c_8(z^m)$.
The character table of $\mathbf{SL}(2,p)$ restricted to the classes $\mathcal{G}_l$
and $\mathcal{H}_m$ is given in Table~\ref{tab:carslgh}, where we
write $c[\frac ab]=e^{\frac{2\pi ia}b}+e^{-\frac{2\pi ia}b}$.
There, $1\le i\le\frac{p-3}{2}$ for $\zeta_i$ and $1\le i\le\frac{p-1}{2}$ for $\theta_i$.
\begin{table}[h]
	\caption[Longtable]{Character table on classes $\mathcal{G}_l$ and $\mathcal{H}_m$.}
	\label{tab:carslgh}
	\begin{tabular}{|c|c|c|c|c|c|c|c|c|} \hline
		& $1$						  & $\psi$			 & $\zeta_{i}$		  & $\xi_{1}$
		& $\xi_{2}$					& $\theta_{i}$	   & $\eta_{1}$
		& $\eta_{2}$ \\ \hline
		deg			 & $1$						  & $p$				& $p+1$
		& $\frac{1}{2}(p+1)$		   & $\frac{1}{2}(p+1)$ & $p-1$				& $\frac{1}{2}(p-1)$
		& $\frac{1}{2}(p-1)$ \\ \hline
		$\mathcal{G}_l$ & $1$						  & $1$				& $c[\frac{il}{p-1}]$
		& $(-1)^{l}$				   & $(-1)^{l}$		 & $0$				  & $0$
		& $0$ \\ \hline
		$\mathcal{H}_m$ & $1$						  & $-1$			   & $0$
		& $0$						  & $0$				& $-c[\frac{im}{p+1}]$ & $(-1)^{m+1}$
		& $(-1)^{m+1}$ \\ \hline
	\end{tabular}
\end{table}

When $p\equiv1\pmod3$, classes $\mathcal{G}_l$ have order $3$ for $l=\frac{p-1}{3}$ and $l=\frac{2(p-1)}{3}$.
On the other hand, when $p\equiv2\pmod3$, classes $\mathcal{H}_m$ have order $3$
for $m=\frac{p+1}{3}$ and $m=\frac{2(p+1)}{3}$. We use then classes
$\mathcal{G}_l$ and $\mathcal{H}_m$ for $A$, $B$ or $C$ depending on the class
of $p\pmod{12}$.

We apply then the well-known formula (see e.g. \cite[Theorem 2.12]{MR0231903})
\[
S(\mathcal{C}_{i},\mathcal{C}_{j},\mathcal{C}_{k})
	=\frac{|\mathcal{C}_{i}||\mathcal{C}_{j}||\mathcal{C}_k|}{|G|}
		\sum_{\chi}\frac{\chi(\mathcal{C}_{i})\chi(\mathcal{C}_{j})\chi(\mathcal{C}_{k})}{\chi(1)}
\]
which counts solutions of the equation $abc=1$ with $a$, $b$, $c$ respectively
in classes $\mathcal{C}_i$, $\mathcal{C}_j$ and $\mathcal{C}_k$. Therefore, it
is enough to see that for any $p$ we have
$S(\mathcal{C}_{i},\mathcal{C}_{j},\mathcal{C}_{k})>0$, where
classes $\mathcal{C}_i$, $\mathcal{C}_j$ and $\mathcal{C}_k$ have orders $3$,
$3$ and $4$. We have then the following possibilities:

\begin{itemize}
	\item If $p\equiv5\pmod{12}$, the element of order $4$ belongs
		to the class $\mathcal{G}$ with $l=\frac{p-1}{4}$ and the element
		of order $3$ belongs to the class $\mathcal{H}$ with $m=\frac{p+1}{3}$.
		Then,
		\[
		S(\mathcal{H},\mathcal{H},\mathcal{G})
		=\frac{p^{3}(p-1)^{2}(p+1)}{p(p-1)(p+1)}\left(1+\frac{1}{p}\right)=(p-1)p(p+1)>0.
		\]

	\item If $p\equiv7\pmod{12}$, the element of order $4$ belongs
		to the class $\mathcal{H}$ with $m=\frac{p+1}{4}$ and the element
		of order $3$ belongs to the class $\mathcal{G}$ with $l=\frac{p-1}{3}$.
		Then,
		\[
		S(\mathcal{G},\mathcal{G},\mathcal{H})
		=\frac{p^{3}(p+1)^{2}(p-1)}{p(p-1)(p+1)}\left(1-\frac{1}{p}\right)=(p-1)p(p+1)>0.
		\]

	\item If $p\equiv1\pmod{12}$
		\begin{align*}
			& S(\mathcal{G}_{\frac{p-1}3},\mathcal{G}_{\frac{p-1}3},\mathcal{G}_{\frac{p-1}4}) \\
			&\hspace{.6cm} = \frac{p^2(p+1)^2}{p-1}\left(1+\frac{1}{p}
				+\sum_{j=1}^{\frac{p-3}{2}} \frac{1}{p+1} c[\frac{j}{3}]^2 c[\frac{j}{4}]
				+ 4 \frac{(-1)^{\frac{p-1}{4}}}{p+1} \right) \\
			&\hspace{.6cm} \ge \frac{p^2(p+1)^2}{p-1}\left(1+\frac{1}{p}+ \sum_{j=1}^{\frac{p-3}{2}} \frac{8}{p+1}
				\cos(\frac{2\pi j}{3})^2 \cos(\frac{2\pi j}{4}) -\frac{4}{p+1}\right)\\
			&\hspace{.6cm} = \frac{p^2(p+1)^2}{p-1}\left(1+\frac{1}{p} + \frac{8}{p+1} \sum_{l=1}^{[\frac{p-3}{4}]} 
				\cos(\frac{4\pi l}{3})^2 \cos({\pi l}) -\frac{4}{p+1}\right) \\
			&\hspace{.6cm} = \frac{p^2(p+1)^2}{p-1}\left(1+\frac{1}{p} + \frac{8}{p+1}
				\Big(\sum_{\substack{l=1 \\ l\equiv 0 (3)}}^{[\frac{p-3}{4}]}
				\cos({\pi l}) + \sum_{\substack{l=1 \\ l\not\equiv 0 (3)}}^{[\frac{p-3}{4}]} \frac{1}{4} \cos({\pi l}) \Big)
				-\frac{4}{p+1}\right) \\
			&\hspace{.6cm} = \frac{p^2(p+1)^2}{p-1}\left(1+\frac{1}{p} +
				\frac{2}{p+1} \Big(\sum_{\substack{l=1 \\ l\equiv 0 (3)}}^{[\frac{p-3}{4}]}
				3 \cos({\pi l}) + \sum_{l=1} ^{[\frac{p-3}{4}]}  \cos({\pi l}) \Big) -\frac{4}{p+1}\right) \\
			&\hspace{.6cm} \ge \frac{p^2(p+1)^2}{p-1} \Big(1+\frac{1}{p} - \frac{12}{p+1}\Big) > 0
		\end{align*}

	\item If $p\equiv11\pmod{12}$
		\begin{align*}
			& S(\mathcal{H}_{\frac{p+1}3},\mathcal{H}_{\frac{p+1}3},\mathcal{H}_{\frac{p+1}4}) \\
			&\hspace{.6cm} = \frac{p^2(p-1)^2}{p+1}\left(1-\frac{1}{p}
				-\sum_{j=1}^{\frac{p-1}{2}} \frac{1}{p-1} c[\frac{j}{3}]^2 c[\frac{j}{4}]
				+ 4 \frac{(-1)^{\frac{p+5}{4}}}{p-1} \right) \\
			&\hspace{.6cm} \ge \frac{p^2(p-1)^2}{p+1}\left(1-\frac{1}{p}- \sum_{j=1}^{\frac{p-1}{2}} \frac{8}{p-1}
				\cos(\frac{2\pi j}{3})^2 \cos(\frac{2\pi j}{4}) -\frac{4}{p-1}\right)\\
			&\hspace{.6cm} = \frac{p^2(p-1)^2}{p+1}\left(1-\frac{1}{p} - \frac{8}{p-1} \sum_{l=1}^{[\frac{p-1}{4}]} 
				\cos(\frac{4\pi l}{3})^2 \cos({\pi l}) -\frac{4}{p-1}\right) \\
			&\hspace{.6cm} = \frac{p^2(p-1)^2}{p+1}\left(
				1-\frac{1}{p} - \frac{8}{p-1} \Big(\sum_{\substack{l=1 \\ l\equiv 0 (3)}}^{[\frac{p-1}{4}]}
				\cos({\pi l}) + \sum_{\substack{l=1 \\ l\not\equiv 0 (3)}}^{[\frac{p-1}{4}]} \frac{1}{4} \cos({\pi l})
				\Big) -\frac{4}{p-1}\right) \\
			&\hspace{.6cm} = \frac{p^2(p-1)^2}{p+1}\left(
				1-\frac{1}{p} - \frac{2}{p-1} \Big(\sum_{\substack{l=1 \\ l\equiv 0 (3)}}^{[\frac{p-1}{4}]}
				3 \cos({\pi l}) + \sum_{l=1} ^{[\frac{p-1}{4}]} \cos({\pi l}) \Big) -\frac{4}{p-1}\right) \\
			&\hspace{.6cm} \ge \frac{p^2(p-1)^2}{p+1} \Big(1-\frac{1}{p} - \frac{4}{p-1}\Big) > 0
		\end{align*}
\end{itemize}

\begin{acknowledgement*}
	We thank Nicol\'as Andruskiewitsch and Fantino Fantino for fruitful
	discussions.
	We have used \textsf{GAP} \cite{GAP} to do some of the computations.
\end{acknowledgement*}

\end{document}